\newenvironment{\thesection}{\section}{Appendix}
\newtheorem{theorem}{Theorem}[section]
\newtheorem*{acknowledgement}{Acknowledgement}
\newtheorem{corollary}[theorem]{Corollary}
\newtheorem{definition}[theorem]{Definition}
\newtheorem{example}[theorem]{Example}
\newtheorem{remark}[theorem]{Remark}
\newcommand{\R}{\mathbb{R}}
\newcommand{\Sym}{\mathrm{Sym}}
\newcommand{\metric}{\langle \, , \, \rangle}
\newcommand{\disp}{\displaystyle}
\newcommand{\ra}{\rightarrow}
\newcommand{\eps}{\varepsilon}
\newcommand{\II}{\mathrm{II}}
\newcommand{\di}{\mathrm{d}}
\newcommand{\weak}{\mathrm{w}}
\newcommand{\EE}{\mathscr{E}}
\newcommand{\HH}{\mathscr{H}}
\newcommand{\Ricc}{\mathrm{Ric}}
\newcommand{\Sect}{\mathrm{Sect}}
\newcommand{\cut}{\mathrm{cut}}
\newcommand{\vol}{\mathrm{vol}}
\newcommand{\lip}{\mathrm{Lip}}
\newcommand{\cal}{\mathcal}
\newcommand{\rad}{\mathrm{rad}}
\newcommand{\capac}{\mathrm{cap}}
\newcommand{\diver}{\mathrm{div}}
\newcommand{\loc}{\mathrm{loc}}
\DeclareMathOperator{\tr}{\mathrm{Tr}}
\begin{document}

\title[Maximum principles and AK-duality]{Maximum principles at infinity and the Ahlfors-Khas'minskii duality: an overview}  
{\author{Luciano Mari}
\address{Dipartimento di Matematica\\Scuola Normale Superiore\\56126 Pisa-Italy}
\email{luciano.mari@sns.it, mari@mat.ufc.br} \thanks{The first author is supported by the grants SNS17\_B\_MARI and SNS\_RB\_MARI of the Scuola Normale Superiore.}
{\author{Leandro F. Pessoa}
\address{Departamento de Matem\'{a}tica\\Universidade Federal do Piau\'{i}-UFPI\\
64049-550, Teresina-Brazil} \email{leandropessoa@ufpi.edu.br} \thanks{The second author was partially supported by CNPq-Brazil.}}
\keywords{Potential theory, Liouville theorem, Omori-Yau, maximum principles, stochastic completeness, martingale, completeness, Ekeland, Brownian motion.}
\footnote{{\it 2010 Mathematics Subject Classification:} Primary 31C12, 35B50; Secondary 35B53, 58J65, 58J05, 53C42.}

\maketitle

\begin{abstract}
This note is meant to introduce the reader to a duality principle for nonlinear equations recently discovered in \cite{valtorta, marivaltorta, maripessoa}. Motivations come from the desire to give a unifying potential-theoretic framework for various maximum principles at infinity appearing in the literature (Ekeland, Omori-Yau, Pigola-Rigoli-Setti), as well as to describe their interplay with properties coming from stochastic analysis on manifolds. The duality involves an appropriate version of these principles formulated for viscosity subsolutions of fully nonlinear inequalities, called the Ahlfors property, and the existence of suitable exhaustion functions called Khas'minskii potentials. Applications, also involving the geometry of submanifolds, will be discussed in the last sections. We conclude by investigating the stability of these maximum principles when we remove polar sets.
\end{abstract}

\tableofcontents

\section{Prelude: maximum principles at infinity} 
Maximum principles at infinity are a powerful tool to investigate problems in Geometry. They arose from the desire to generalize the statement that, on a compact Riemannian manifold $(X, \metric)$ of dimension $m \ge 2$, every function $u \in C^2(X)$ attains a maximum point $x_0$ and 
\begin{equation}\label{finite_max}
(i) \ : \  \ u(x_0) = \sup_X u, \qquad (ii) \ : \ \ |\nabla u(x_0)| = 0, \qquad (iii) \ : \ \  \nabla^2 u(x_0) \le 0, 
\end{equation}
where $\nabla^2 u$ is the Riemannian Hessian and the last relation is meant in the sense of quadratic forms. If $M$ is noncompact and given $u \in C^2(X)$ bounded from above, although one cannot ensure the existence of a maximum point, there could still exist a sequence $\{x_k\} \subset X$ such that some of the relations in \eqref{finite_max} hold in a limit sense as $k \ra \infty$. Informally speaking, when this happens we could think that $X$ is ``not too far from being compact". The first example of maximum principle at infinity is the famous Ekeland's principle, \cite{ekeland_2}, that can be stated as follows:
\begin{definition}\label{def_ekeland_classico}
A metric space $(X,\di)$ satisfies the \emph{Ekeland maximum principle} if, for each $u$ upper semincontinuous (USC) on $X$ and bounded from above, there exists a sequence $\{x_k\} \subset X$ with the following properties:
$$
u(x_k) > \sup_X u - \frac{1}{k}, \qquad u(y) \le u(x_k) + \frac{1}{k}\di(x_k,y) \ \text{ for each } \, y \in X.
$$
\end{definition}
The full statement of Ekeland's principle contains, indeed, a further property that is crucial in applications, that is, the possibility to create one such $\{x_k\}$ suitably close to a given maximizing sequence $\{\bar x_k\}$. We will briefly touch on it later. By works of I. Ekeland, J.D. Weston and F. Sullivan, cf. \cite{ekeland_2, weston, sullivan}, the validity of Ekeland's principle in the form given above is in fact \emph{equivalent} to $X$ being a complete metric space. Therefore, in the smooth setting, the (geodesic) completeness of a manifold $X$ enables to find $\{x_k\}$ approximating both $(i)$ and $(ii)$ in \eqref{def_ekeland_classico}. However, condition $(iii)$ requires further restrictions on the geometry of $X$, first investigated by H. Omori \cite{omori} and S.T. Yau \cite{chengyau, yau} (the second with S.Y. Cheng). They introduced the following two principles, respectively, in the Hessian case \cite{omori} and in the Laplacian case \cite{chengyau, yau}:
\begin{definition}\label{def_omoriyau}
Let $(X, \metric)$ be a Riemannian manifold. We say that $X$ satisfies the \emph{strong Hessian (respectively, Laplacian) maximum principle} if, for each $u \in C^2(X)$ bounded from above, there exists a sequence $\{x_k\} \subset X$ with the following properties:
\begin{equation}\label{strong_HessianLaplacian}
\begin{array}{ll}
(\text{Hessian}) & \quad u(x_k) > \sup_X u - k^{-1}, \qquad |\nabla u(x_k)| < k^{-1}, \qquad \nabla \di u(x_k) \le k^{-1} \metric ;\\[0.2cm]
(\text{Laplacian}) & \quad u(x_k) > \sup_X u - k^{-1}, \qquad |\nabla u(x_k)| < k^{-1}, \qquad \Delta u(x_k) \le k^{-1}.
\end{array}
\end{equation}
\end{definition}
Here, the word ``strong" refers to the presence of the condition on the gradient. Historically, these principles are called the \emph{Omori-Yau maximum principles}, and proved to be remarkably effective in a wealth of different geometric problems. Among them, we stress the striking proofs of the generalized Schwarz Lemma for maps between Kahler manifolds in \cite{yau_schwarz}, and of the Bernstein theorem for maximal hypersurfaces in Minkovski space in \cite{chengyau_minkovski}. \par

Geometric conditions to guarantee the Omori-Yau principles are often expressed in terms of growths of the curvatures of $X$ with respect to the distance $\varrho(x)$ from a fixed origin $o \in X$, but are not necessarily depending on them. The most general  known condition guaranteeing the Omori-Yau principles is given by \cite{prsmemoirs} (cf. also improvements in \cite{borbely, bessalimapessoa}): the principle holds whenever $X$ supports a function $w$ with the following properties\footnote{Here, as usual, if we write ``$w(x) \ra +\infty$ as $x$ diverges" we mean that the sublevels of $w$ have compact closure in $X$, that is, that $w$ is an exhaustion.}:
\begin{equation}\label{strongKhasminskii}
\begin{array}{l}
0 < w \in C^2(X\backslash K) \ \text{ for some compact $K$}, \qquad w \ra + \infty \ \text{ as } \, x \ \text{ diverges}, \\[0.2cm]
|\nabla w| \le G(w), \qquad \text{and} \quad \left\{ \begin{array}{ll} 
\nabla^2 w \le G(w) \metric & \quad \text{for Omori's principle}, \\[0.1cm]
\Delta w \le G(w) & \quad \text{for Yau's principle},
\end{array}\right.
\end{array}
\end{equation} 
for some $G$ satisfying
\begin{equation}\label{ipo_G_SMP}
0< G \in C^1(\R^+), \qquad G' \ge 0, \qquad \int^{+\infty} \frac{\di s}{G(s)} = +\infty. 
\end{equation}
For instance, $G(t) = (1+t)$ gives the sharp polynomial threshold. The criterion is effective, since the function in \eqref{strongKhasminskii} can be explicitly found in a number of geometrically relevant applications: for instance, if the radial sectional curvature\footnote{The radial sectional curvature is the sectional curvature restricted to $2$-planes containing $\nabla \varrho$. Inequality $\Sect_\rad \ge - G^2(\varrho)$ means that $\Sect(\pi_x) \ge - G^2\big(\varrho(x)\big)$ for each $x \not\in \{o\} \cup \cut(o)$ and $\pi_x \le T_x X$ $2$-plane containing $\nabla \rho$.} (respectively, Ricci curvature) is bounded from below as follows:
\begin{equation}\label{bounds_omoriyau}
\begin{array}{ll}
\Sect_\rad \ge - G^2(\varrho) \qquad \text{on } \, X, & \quad \text{for Omori's principle,} \\[0.2cm]
\Ricc(\nabla \varrho, \nabla \varrho) \ge - G^2(\varrho) \qquad \text{on } \, X \backslash \{ \{o\} \cup \cut(o)\} & \quad \text{for Yau's principle,}
\end{array}
\end{equation}
then one can choose $w(x) = \log(1+\varrho(x))$ in \eqref{strongKhasminskii} to deduce the validity of the strong Hessian, respectively, strong Laplacian principle (technically, $\varrho$ is not $C^2$, but one can overcome the problem by using Calabi's trick, see \cite{prsmemoirs}). Note that \eqref{bounds_omoriyau} includes the case when $G$ is constant, considered in \cite{omori, yau}.
However, the existence of $w$ could be granted even without bounds like \eqref{bounds_omoriyau}: for  instance, in the strong Laplacian case, this happens if $X$ is properly immersed with bounded mean curvature in $\R^m$ (or in a Cartan-Hadamard ambient space with bounded sectional curvature), or if $X$ is a Ricci soliton, see \cite{aliasmastroliarigoli}. The function $w$ in \eqref{strongKhasminskii} is an example of what we will call a \emph{Khas'minskii potential}. The reason for the name will be apparent in a moment.\par

\subsection{An example: immersions into cones}

There is, by now, a wealth of applications of the Omori-Yau principles in geometry, see for instance  \cite{aliasmastroliarigoli}. Here, we illustrate how the principles can be effectively used in geometry by means of the following example in \cite{maririgoli}, that is related to the pioneering paper by Omori \cite{omori}. Hereafter, a non-degenerate cone $\mathcal{C}_{o,v,\eps}$ of center $o \in \R^n$, axis $v \in \mathbb{S}^{n-1}$ and width $\eps \in (0,\pi/2)$ is the set of points $x \in \R^n$ such that
$$
\langle \frac{x-o}{|x-o|}, v \rangle \ge \cos \eps. 
$$

\begin{theorem}\cite[Cor. 1.18]{maririgoli}
Let $\varphi : X^m \ra \R^{2m-1}$ be an isometric immersion. Denoting with $\rho$ the distance from a fixed origin $o$, assume that the sectional curvature of $X$ satisfies
\begin{equation}\label{eq_omori}
- C(1+\rho^2) \le \Sect \le 0, \qquad \text{on } \, M,
\end{equation}
for some constant $C>0$. Then, $\varphi(X)$ cannot be contained into any non-degenerate cone of $\R^{2m-1}$.
\end{theorem}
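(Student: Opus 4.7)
The curvature hypothesis $-C(1+\rho^2)\le\Sect$ coincides with \eqref{bounds_omoriyau} (Hessian case) for $G(t):=\sqrt{C(1+t^2)}$, which satisfies \eqref{ipo_G_SMP}; hence the Khas'minskii potential $w=\log(1+\rho)$ realises \eqref{strongKhasminskii} and Omori's strong Hessian maximum principle is available on $X$. Arguing by contradiction, suppose $\varphi(X)\subset\mathcal{C}_{o,v,\eps}$. The codimension $(2m-1)-m=m-1<m$ combined with $\Sect\le 0$ triggers a classical consequence of Otsuki's algebraic lemma: at every $x\in X$ there exists a unit ``asymptotic direction'' $\xi\in T_xX$ with $\II(\xi,\xi)=0$. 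Indeed, if $\II(v,v)\neq 0$ for every nonzero $v$, Otsuki would produce linearly independent $v_1,v_2$ with $\II(v_1,v_1)=\II(v_2,v_2)$ and $\II(v_1,v_2)=0$, whence Gauss forces $\Sect(v_1\wedge v_2)>0$, a contradiction.

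\emph{Test function.} Consider the ``cone defect''
$$
\Phi(y) := |y-o|^2 - \sec^2(\eps)\,\langle y-o,v\rangle^2, \qquad y\in\R^{2m-1},
$$
so that $u:=\Phi\circ\varphi\le 0$ on $X$, i.e., $u$ is bounded above. Its Euclidean Hessian is $\Hess^{\R^{2m-1}}\Phi=2\bigl(I-\sec^2(\eps)\,v\otimes v\bigr)$, and the standard immersion Hessian formula
$$
\Hess^X u(Y,Z) = \Hess^{\R^{2m-1}}\Phi\bigl(d\varphi(Y),d\varphi(Z)\bigr) + \bigl\langle \nabla^{\R^{2m-1}}\Phi,\,\II(Y,Z)\bigr\rangle
$$
yields, for any unit asymptotic $\xi\in T_xX$,
$$
\Hess^X u(\xi,\xi) \;=\; 2\bigl(1-\sec^2(\eps)\,\langle d\varphi(\xi),v\rangle^2\bigr).
$$
Applying Omori's principle to $u$ produces $\{x_k\}\subset X$ with $u(x_k)\to\sup u$, $|\nabla u(x_k)|\to 0$, and $\Hess^X u(x_k)\le k^{-1}\metric$. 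Choosing unit asymptotic $\xi_k\in T_{x_k}X$, the Hessian bound forces $\langle d\varphi(\xi_k),v\rangle^2\to\cos^2(\eps)$.

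\emph{Closing and main obstacle.} The gradient smallness $|\nabla u(x_k)|\to 0$ makes the vector $\nabla^{\R^{2m-1}}\Phi|_{\varphi(x_k)}=2\bigl[(\varphi(x_k)-o)-\sec^2(\eps)\langle\varphi(x_k)-o,v\rangle v\bigr]$ asymptotically normal to $\varphi(X)$, while the value condition $u(x_k)\to\sup u\le 0$ constrains $\varphi(x_k)$ to approach the lateral surface of $\mathcal{C}_{o,v,\eps}$ at infinity (when $\sup u=0$) or to localise in a strictly interior region (when $\sup u<0$). Combining these two pieces of information with the directional limit $\langle d\varphi(\xi_k),v\rangle^2\to\cos^2(\eps)$ produces a geometric incompatibility with the cone structure, which is the desired contradiction. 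Setting up this final incompatibility cleanly — i.e., showing that no sequence of configurations $\bigl(\varphi(x_k),d\varphi(\xi_k)\bigr)$ can simultaneously satisfy all three Omori limit conditions together with $\II(\xi_k,\xi_k)=0$ — is the main technical hurdle of the argument, and constitutes the essential step where the codimension assumption $2m-1$ is felt through Otsuki's lemma.
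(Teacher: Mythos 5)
There is a genuine gap, and it lies exactly where you flag it yourself: the ``closing'' step is not a step but a wish. The test function you propose, the cone defect $u=|\varphi-o|^2-\sec^2(\eps)\langle\varphi-o,v\rangle^2$, does not by itself deliver the contradiction. Along an asymptotic direction $\xi$ your computation gives $\nabla^2 u(\xi,\xi)=2\bigl(1-\sec^2(\eps)\langle d\varphi(\xi),v\rangle^2\bigr)$, a quantity whose sign is \emph{not} controlled: it ranges from $2(1-\sec^2\eps)<0$ (when $d\varphi(\xi)$ is nearly parallel to $v$) to $2>0$ (when $d\varphi(\xi)\perp v$). So the Omori condition $\nabla^2 u(x_k)\le k^{-1}\metric$ only tells you that $\langle d\varphi(\xi_k),v\rangle^2\ge\cos^2\eps-o(1)$, which is perfectly compatible with what can happen geometrically; no contradiction follows without a substantial further argument, and you do not give one. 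In addition, the sets $\{u>\sup u-\delta\}$ for your $u$ need not have bounded image under $\varphi$ (points can escape to infinity near the lateral face of the cone), so there is no compactness to lean on in a limiting argument either.

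The paper's proof takes the same overall route — Omori's strong Hessian principle plus Otsuki's lemma to produce an asymptotic direction — but with a different, carefully calibrated test function going back to Omori's original 1967 paper:
$$ u(x)=\sqrt{T^2+a^2|\varphi(x)|^2}-\langle\varphi(x),v\rangle, \qquad T=\langle\varphi(x_0),v\rangle,\ a\in(0,\cos\eps). $$
Two features of this choice are essential and are precisely what your function lacks. First, because $a<\cos\eps$ and $\varphi(X)$ lies in the cone, the nonempty upper level set $\{u>0\}$ has \emph{bounded} image $\varphi(\{u>0\})$. Second, for any unit asymptotic direction $W$ (so $\II(W,W)=0$) one computes
$$ \nabla^2 u(W,W)=\frac{a^2}{\sqrt{T^2+a^2|\varphi|^2}}-\frac{a^4\langle W,\varphi\rangle^2}{(T^2+a^2|\varphi|^2)^{3/2}}\ \ge\ \frac{a^2T^2}{(T^2+a^2|\varphi|^2)^{3/2}}\ \ge\ c>0 $$
on $\{u>0\}$, the last inequality using the boundedness just mentioned. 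Hence on the whole of $\{u>0\}$ one has $\sup_{|Z|=1}\nabla^2 u(Z,Z)\ge c>0$ uniformly, which contradicts the Omori sequence $\nabla^2 u(x_k)\le k^{-1}\metric$ directly, with no further discussion of where $\varphi(x_k)$ goes. To repair your argument, you should either switch to this test function or supply the missing geometric incompatibility for the cone-defect function; as things stand I do not see that such an incompatibility even exists, since nothing prevents, say, a sequence with $d\varphi(\xi_k)$ aligned within angle $\eps$ of $v$ while $\II(\xi_k,\xi_k)=0$ and $|\nabla u(x_k)|\to 0$.
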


\begin{proof}[{\bf Sketch of the proof:}]
Suppose, by contradiction, that $\varphi(X) \subset \mathcal{C}_{o,v,\eps}$ for some $o,v,\eps$. Without loss of generality, we can assume that $o$ is the origin of $\R^{2m-1}$. The first step is to construct a function that encodes the geometry of the problem at hand: following \cite{omori}, we fix $x_0 \in X\backslash\{o\}$ and $a \in (0,\cos\eps)$, we set $T = \langle \varphi(x_0), v \rangle$ and we define 
$$
u(x) = \sqrt{T^2 + a^2|\varphi(x)|^2} - \langle \varphi(x),v\rangle. 
$$
By construction, it is easy to show that $u<T$ on $X$ and that the non-empty upper level set $\{u>0\}$ (that contains $x_0$) has bounded image $\varphi\big(\{u>0\}\big)$. In view of \eqref{eq_omori} and the discussion above, the strong Hessian principle holds and can be applied to $u$. However, for $x \in \{u>0\}$ and unit vector $W \in T_xX$, a computation shows that
\vspace{0.2cm}
\begin{eqnarray*}
\disp \nabla^2 u(W,W) &=& \disp \frac{a^2( 1 + \langle \II(W,W), \varphi \rangle}{\sqrt{T^2+a^2|\varphi|^2}} - \langle\II(W,W),v\rangle - \frac{a^4 \langle W, \varphi \rangle^2}{(T^2+a^2|\varphi|^2)^{3/2}} \\[0.2cm]
& \ge & \disp \frac{a^2( 1 - |\II(W,W)||\varphi|)}{\sqrt{T^2+a^2|\varphi|^2}} - |\II(W,W)| - \frac{a^4 |\varphi|^2}{(T^2+a^2|\varphi|^2)^{3/2}} \\[0.2cm]
& \ge & \disp \frac{a^2T^2}{(T^2+a^2|\varphi|^2)^{3/2}} - |\II(W,W)| g(x),
\end{eqnarray*}
for some continuous function $g$ on $\{ u>0\}$. Since the codimension of $\varphi$ is strictly less than $m$ and the sectional curvature is non-positive, a useful algebraic lemma due to Otsuki \cite{docarmo, prsmemoirs} guarantees the existence of $W \in T_xX$ such that $\II(W,W)=0$. Having fixed such $W$, and taking into account that $\varphi(\{u>0\})$ is bounded, there exists a uniform constant $c>0$ such that 
$$
\sup_{Z \in T_xX, \ |Z|=1} \nabla^2 u(Z,Z) \ge \nabla^2 u(W,W) \ge \frac{a^2T^2}{(T^2+ a^2|\varphi|^2)^{3/2}} \ge c>0 \qquad \text{on } \, \{u>0\}.
$$ 
However, evaluating the above inequality on a sequence $\{x_k\}$ realizing the strong Hessian principle we obtain a contradiction.
\end{proof}

\begin{remark}
\emph{In its full strength, Ekeland's principle also guarantees that the sequence $\{x_k\}$ satisfying \eqref{def_ekeland_classico} can be chosen to be close to a given maximizing sequence $\{\bar x_k\}$ with explicit bounds, a fact that is very useful in applications to functional analysis and PDEs. On the other hand, to present no systematic investigation of an analogous property was performed for the Omori-Yau principles. A notable exception, motivated by a geometrical problem involving convex hulls of isometric immersions, appeared in \cite{fontenelexavier}: there, the authors proved that if $X$ is complete and $\Sect \ge - c$ for some $c \in \R^+$ (resp. $\Ricc \ge -c$), any maximizing sequence $\{\bar x_k\}$ has a \emph{good shadow}, that is, a sequence $\{x_k\}$ satisfying Omori (resp. Yau) principle  and also $\di(x_k,\bar x_k) \ra 0$ as $k \ra \infty$.  
}
\end{remark}

The properties in \eqref{strong_HessianLaplacian} can be rephrased as follows, say in the Hessian case: denoting with $\lambda_1(A) \le \lambda_2(A) \le \ldots \le \lambda_m(A)$ the eigenvalues of a symmetric matrix $A$, $X$ has the strong Hessian principle if either one of the following properties holds:
\begin{itemize}
\item[(i)] it is not possible to find a function $u \in C^2(X)$ bounded from above and such that, on some non-empty upper level-set $\{u> \gamma\}$, 
\begin{equation}\label{eq_stronghessian}
\max\Big\{ |\nabla u| -1, \lambda_m(\nabla^2 u) - 1\Big\} \ge 0.
\end{equation}
\item[(ii)] for every open set $U \subset M$ and every $u \in C^2(U)\cap C(\overline{U})$ bounded from above and solving the differential inequality
$$
\max\Big\{ |\nabla u| -1, \lambda_m(\nabla^2 u) - 1\Big\} \ge 0,
$$
it holds $\sup_U u = \sup_{\partial U} u$.
\end{itemize}
By rescaling, the constant $1$ can be replaced by any fixed positive number. It is evident that the strong Hessian principle is equivalent to (i), while (i)$\Leftrightarrow$(2) can easily be proved by contradiction: if we assume that (i) fails for some $u$, consider as $U$ the upper level set where \eqref{eq_stronghessian} holds and contradict (ii), while if (ii) fails for some $u$, then take any upper level set at height $\gamma \in (\sup_{\partial U} u, \sup_U u)$ to contradict (i).\par
Conditions (i) and (ii) are invariant by translations $u \mapsto u + \mathrm{const}$. For future use it is important to describe another characterization, not translation invariant, where the constant $1$ in (ii) is replaced by a pair of functions 
\begin{equation}\label{fxi}\tag{$f\xi$}
\left\{ \begin{array}{l}
f \in C(\R), \quad f(0)=0, \quad f>0 \ \ \text{on } \, \R^+, \quad f \ \text{ is odd and strictly increasing;} \\[0.2cm]
\xi \in C(\R), \quad \xi(0)=0, \quad \xi<0, \ \ \text{on } \, \R^+, \quad \xi \ \text{ is odd and strictly  decreasing.} 
\end{array}
\right.
\end{equation}
Namely, (i) and (ii) are also equivalent to
\begin{itemize}
\item[($\mathscr{A}$)] for some (\emph{equivalently}, any) pair $(f,\xi)$ satisfying \eqref{fxi}, the following holds: for every open set $U \subset M$ and every $u \in C^2(U)\cap C(\overline{U})$ bounded from above and solving the differential inequality
\begin{equation}\label{eq_stronghessian_uguale}
\max\Big\{ |\nabla u| - \xi(-u), \ \lambda_m(\nabla^2 u) - f(u)\Big\} \ge 0 \qquad \text{on } \, \big\{ x \ : \ u(x)>0\big\} \neq \emptyset,  
\end{equation}
it holds $\sup_U u = \sup_{\partial U} u$.
\end{itemize}
The choice of the upper level-set $\{u>0\}$ is related to the vanishing of $f,\xi$ in \eqref{fxi} at zero and is, of course, just a matter of convenience. The proof of (ii)$\Leftrightarrow$($\mathscr{A}$) proceeds by translation and rescaling arguments, and localizing on suitable upper level sets of $u$, and is given in detail in \cite[Prop. 5.1]{maripessoa}. From the technical point of view, the dependence of ($\mathscr{A}$) on $f,\xi$ just in terms of the mild properties in \eqref{fxi}, and especially the possibility to check ($\mathscr{A}$) in terms of a \emph{single} pair $f,\xi$ satisfying \eqref{fxi}, is useful in applications.

Characterization ($\mathscr{A}$) is in the form of a maximum principle on sets with boundary, for subsolutions of the fully nonlinear inequality
$$
\mathscr{F}(x,u(x), \nabla u(x), \nabla^2 u(x) \big) \doteq \max\Big\{ |\nabla u| - \xi(-u), \ \lambda_m(\nabla^2 u) - f(u)\Big\} \ge 0.
$$
This point of view relates the principles to another property that can be seen as a replacement of the compactness of $X$, the \emph{parabolicity} of $X$. We recall that
\begin{definition}
A manifold $X$ is said to be \emph{parabolic} if each solution of $\Delta u \ge 0$ on $X$ that is bounded from above is constant.
\end{definition}

Indeed, L.V. Ahlfors (see \cite[Thm. 6C]{ahlforssario}) observed that $X$ is parabolic if and only if property ($\mathscr{A}$) holds with \eqref{eq_stronghessian_uguale} replaced by $\Delta u \ge 0$. The problem of deciding whether a manifold is parabolic or not is classical, and there is by now a well established theory, see \cite{grigoryan} for a thorough account. For surfaces, the theory arose in connection to the \emph{type problem} for Riemann surfaces, cf. \cite{ahlforssario}, and arguments involving parabolicity are still crucial in establishing a number of powerful, recent results in modern minimal surface theory (see for instance \cite{mazet, meeksrosenberg, cokumero} for beautiful examples).\par 

The tight relation between parabolicity and potential theory suggests that it might be possible to treat both Ekeland and Omori-Yau principles as well in terms of a fully nonlinear potential theory. In recent years, there has been an increasing interest in the theory of fully-nonlinear PDEs on manifolds, and especially R. Harvey and B. Lawson dedicated a series of papers \cite{HL_primo, HL_dir, HL_existence, HL_plurisub, HL_removable, HL_SMP} to develop a robust geometric approach for fully nonlinear PDEs well suited to do potential theory for those equations. Their work fits perfectly to the kind of problems considered in the present paper, and will be introduced later. Our major concern in the recent \cite{marivaltorta, maripessoa, mmr_berestycki} is to put the above principles, as well as other properties to be discussed in a moment, into a unified framework where new relations, in particular an underlying duality, could emerge between them.

\subsection{Parabolicity, capacity and Evans potentials}

There are a number of equivalent conditions characterizing the parabolicity of $X$, see \cite[Thm. 5.1]{grigoryan} and \cite{pigolasetti_ensaio}, and we now focus on two of them.\par
The first one describes parabolic manifolds as those for which the $2$-capacity of every compact $K$ vanishes. We recall that, for fixed $q \in (1,\infty)$, the $q$-capacity of a condenser $(K, \Omega)$ with $K \subset \Omega \subset X$, $K$ compact, $\Omega$ open, is the following quantity:
\begin{equation}\label{def_kcapac}
\capac_q(K,\Omega) = \inf \left\{  \int_\Omega |\nabla \phi|^q, \ \ : \ \  \phi \in \lip_c(\Omega), \ \ \phi \ge 1 \ \text{on $K$}\right\}.
\end{equation}
If $K$ and $\Omega$ have Lipschitz boundary, the infimum is realized by the unique solution $u$ of the $q$-Laplace equation
\begin{equation}\label{def_kcapacitor}
\left\{ \begin{array}{l}
\Delta_q u \doteq \diver \left( |\nabla u|^{q-2} \nabla u\right) = 0 \qquad \text{on } \, \Omega \backslash K, \\[0.2cm]
u=1 \quad \text{on } \, K, \qquad u=0 \quad \text{on } \, \partial \Omega,
\end{array}\right.
\end{equation}
called the $q$-capacitor of $(K, \Omega)$. A manifold is called $q$-parabolic if $\capac_q(K,X) = 0$ for some (equivalently, every) compact set $K$, see \cite{holopainen, troyanov2}. By extending Ahlfors result for the Laplace-Beltrami operator \cite{prsnonlinear, prs_milan}, 
\begin{equation}\label{ahlfors_kparab}
\text{$X$ is $q$-parabolic} \qquad \Longleftrightarrow \qquad \left\{ \begin{array}{c}
\forall \, U \subset X \ \text{ open, } \ \forall u \in C^2(U)\cap C(\overline{U}),\\[0.2cm]
\text{bounded above and solving $\Delta_q u \ge 0$,} \\[0.2cm]
\text{it holds } \quad \sup_U u = \sup_{\partial U} u,
\end{array}\right\}
\end{equation}
and both are equivalent to the constancy of solutions of $\Delta_q u \ge 0$ that are bounded from above.\par 
There is a further, quite useful characterization of parabolicity (in the linear setting $q=2$), expressed in terms of suitable exhaustion functions and studied by Z. Kuramochi and M. Nakai \cite{kuramochi, nakai, sarionakai}, with previous contribution by R.Z. Khas'minskii \cite{khasminski}. In \cite{kuramochi, nakai, sarionakai}, the authors proved that $X$ is parabolic if and only if, for each compact set $K$ with smooth boundary, there exists a function $w$ solving
\begin{equation}\label{eq_evans}
\begin{array}{l}
w \in C^\infty(X \backslash K), \qquad w > 0 \ \ \text{ on } \, X \backslash K, \qquad w=0 \ \ \text{ on } \, \partial K, \\[0.2cm]
w(x) \ra +\infty \ \ \text{ as } \, x \text{ diverges,} \qquad \Delta w = 0 \quad \text{on } \, X \backslash K. 
\end{array}
\end{equation}
Such a $w$ is named an \emph{Evans potential} on $X \backslash K$. Evans potentials proved to be useful in investigating the topology of $X$ by means of the beautiful Li-Tam-Wang's theory of harmonic functions, cf. \cite{litam, sungtamwang} (cf. also \cite{li, prs} for comprehensive accounts), and it is therefore of interest to see whether other Liouville type properties could be characterized in terms of Evans potentials. One quickly realizes that, for this to hold, the function $\mathscr{F}$ replacing $\Delta$ in \eqref{eq_evans} must have a very specific form, and in fact, to present, the Laplace-Beltrami is the only operator for which solutions $w$ in \eqref{eq_evans} \emph{with the equality sign} have been constructed. The reason is that the proof in \cite{kuramochi, nakai, sarionakai}, see also \cite{valtorta}, strongly uses the characterization of parabolicity in terms of the $2$-capacity and the linearity of the Laplace-Beltrami operator. Even for $q \neq 2$, the equivalence of $q$-parabolicity with the existence of $q$-harmonic Evans potentials is still an open problem, although results for more general operators on rotationally symmetric manifolds (cf. the last section in \cite{marivaltorta}) indicate that it is likely to hold.\par 
Quite differently, if we just require that $w$ be a \emph{supersolution}, that is, $\Delta_q w \le 0$, things are much more flexible and are still worth interest, as we shall see later. In \cite{valtorta} the author proved that the $k$-parabolicity is equivalent to the existence of $w$ satisfying \eqref{eq_evans} with the last condition weakened to $\Delta_q w \le 0$. Although part of the proof uses $q$-capacities and is therefore very specific to the $q$-Laplacian, the underlying principle is general: the construction of $w$ proceeds by ``stacking" solutions of suitable obstacle problems, an idea that will be described later in a more general framework.\par
It is natural to ask what is the picture for $p = + \infty$, that is, setting,
$$
\capac_\infty(K,\Omega) = \inf \Big\{  \|\nabla \phi\|_\infty, \ \ : \ \ \phi \in \lip_c(\Omega), \ \ \phi \ge 1 \ \text{on $K$}\Big\},
$$
to study $\infty$-parabolic manifolds, defined as those for which $\capac_\infty(K,X)=0$ for  every compact $K$. The problem has recently been addressed in \cite{pigolasetti_ensaio}, where the authors proved that 
$$
\text{$X$ is $\infty$-parabolic} \qquad \Longleftrightarrow \qquad \text{$X$ is (geodesically) complete}
$$
and thus, a-posteriori, $\infty$-parabolicity is equivalent to Ekeland's principle. Below, we shall complement these characterizations as applications of our main duality principle. To do so, we exploit  the existence of $\infty$-capacitors for $(K, \Omega)$, that is, \emph{suitable} minimizers (absolutely minimizing Lipschitz extensions, cf. \cite{crandall_visit, juutinen}) for $\capac_\infty(K,\Omega)$. Their existence was first considered by G. Aronsson \cite{aronsson} and proved by R. Jensen \cite{jensen} when $X=\R^m$: the $\infty$-capacitor turns out to be the (unique) solution of  
\begin{equation}\label{def_inftycapacitor}
\left\{ \begin{array}{l}
\Delta_\infty u \doteq \nabla^2 u(\nabla u, \nabla u) = 0 \quad \text{on } \, \Omega \backslash K, \\[0.2cm]
u=1 \quad \text{on } \, K, \qquad u=0 \quad \text{on } \, \partial \Omega,
\end{array}\right.
\end{equation}
where the equation is meant in the viscosity sense (see below). The operator $\Delta_\infty$, called the infinity Laplacian, has recently attracted a lot of attention because of its appearance in Analysis, Game Theory and Physics, and its investigation turns out to be challenging because of its high degeneracy (see \cite{jensen, crandall_visit, CEG}).

\subsection{Link with stochastic processes}
Before introducing the duality, we mention some other important function-theoretic properties of $X$, coming from stochastic analysis, that fit well with our setting and give further geometric motivation. We start recalling that parabolicity can be further characterized in terms of the Brownian motion on $X$. Briefly, on each Riemannian manifold $X$ one can construct the heat kernel $p(x,y,t)$ (cf. \cite{dodziuk}), and consequently a stochastic process $\mathscr{B}_t$ whose infinitesimal generator is $\Delta$, called the Brownian motion, characterized by the identity
\begin{equation}\label{prob_brownian}
\mathbb{P}\left( \mathscr{B}_t \in \Omega \ : \ \mathscr{B}_0 = x\right) = \int_\Omega p(x,y,t) \di y,
\end{equation}
for every open subset $\Omega \subset X$ (see \cite{bar} for a beautiful, self-contained introduction). 
\begin{definition}
Let $\mathscr{B}_t$ be the Brownian motion on $X$.
\begin{itemize}
\item $\mathscr{B}_t$ on $X$ is called \emph{recurrent} if, almost surely, its trajectories visit every fixed compact set $K \subset X$ infinitely many times.
\item $\mathscr{B}_t$ on $X$ is called \emph{non-explosive} if, almost surely, its trajectories do not escape to infinity in finite time, that is, \eqref{prob_brownian} with $\Omega = X$ is identically $1$ for some (equivalently, every) $(x,t) \in X \times \R^+$. 
\end{itemize}
\end{definition}
A manifold whose Brownian motion is non-explosive is called \emph{stochastically complete}. The recurrency of $\mathscr{B}_t$ is equivalent to the parabolicity of $X$, cf. \cite{grigoryan}, and can therefore be characterized in terms of a property of type ($\mathscr{A}$) above. Similarly, by \cite[Thm. 6.2]{grigoryan}, $X$ is stochastically complete if and only if there exists no open subset $U \subset X$ supporting a solution $u$ of 
\begin{equation}\label{ahlfors_stoca}
\left\{\begin{array}{l}
\disp \Delta u \ge \lambda u \qquad \text{on } \, U, \quad \text{for some} \, \lambda \in \R^+, \\[0.2cm]
\disp u>0 \quad \text{on } \, U, \qquad u =0 \quad \text{on } \, \partial U. 
\end{array}\right.
\end{equation}
for some (equivalently, any) fixed $\lambda  \in \R^+$. The last property is ($\mathscr{A}$) provided that \eqref{eq_stronghessian} is replaced by $\Delta u -\lambda u \ge 0$, that is, choosing $f(r) = \lambda r$ and removing the gradient condition. Again by translation, rescaling and localizing arguments, the function $\lambda r$ can be replaced by any $f$ satisfying \eqref{fxi}.\par
Around 15 years ago, new interest arose around the notion of stochastic completeness, after the observation in \cite{prs_proceeding, prsmemoirs} that the property is \emph{equivalent} to a relaxed form of the strong Laplacian principle, called the \emph{weak (Laplacian) maximum principle}. Namely, $X$ has the weak Laplacian principle if, for each $u \in C^2(X)$ bounded above, there exists a sequence $\{x_k\}$ such that
$$ 
u(x_k) > \sup_X u - k^{-1}, \qquad \Delta u(x_k) \le k^{-1},
$$
that is, \eqref{strong_HessianLaplacian} holds with no gradient condition. The weak Hessian principle can be defined accordingly. It turns out that, in most geometric applications, the gradient condition in \eqref{strong_HessianLaplacian} is unnecessary, making thus interesting to study both the possible difference between weak and strong principles, and the geometric conditions guaranteeing the weak principles. 
\begin{remark}[\textbf{Strong Laplacian $\neq$ weak Laplacian}]
\emph{It is easy to construct \emph{incomplete} manifolds satisfying the weak Laplacian principle but not the strong one, for instance $X=\R^m\backslash \{0\}$ (cf. \cite[Ex. 1.21]{maripessoa}). A nice example of a \emph{complete}, radially symmetric surface satisfying the weak Laplacian principle but not the strong one has recently been found in \cite{borbely_counter}. Therefore, the two principles are really different. Also, the weak Laplacian principle is unrelated to the (geodesic) completeness of $X$, and in fact, if one removes a compact subset $K$ that is polar for the Brownian motion on $X$, $X \backslash K$ is still stochastically complete (see Theorem \ref{teo_polar} below).   
}
\end{remark}
\begin{remark}[\textbf{Geometric conditions for weak Laplacian principle}]
\emph{Conditions involving just the volume growth of balls in $X$ (that, by Bishop-Gromov volume comparison, are weaker than those in \eqref{bounds_omoriyau}, cf. \cite{prs}) were first considered in \cite{karp, li}, later improved in \cite[Thm. 9.1]{grigoryan}: $X$ is stochastically complete provided that
\begin{equation}\label{mimi}
\int^{+\infty} \frac{r}{\log \vol B_r}\di r = +\infty. 
\end{equation}
}
\end{remark}

As a consequence of work of R.Z. Khas'minskii \cite{khasminski, prsmemoirs}, $X$ is stochastically complete provided that it supports an exhaustion $w$ outside a compact set $K$ that satisfies
\begin{equation}\label{khasmi_original}
0 < w \in C^2(X \backslash K), \qquad w(x) \ra +\infty \ \text{ as $x$ diverges,} \qquad \Delta w \le \lambda w \ \text{ on } \, X \backslash K,
\end{equation}
for some $\lambda >0$. The analogy with \eqref{strongKhasminskii} and \eqref{eq_evans} is evident, and it is the reason why we call $w$ in \eqref{strongKhasminskii} a Khas'minskii type potential. It was first observed in \cite{marivaltorta} that, in fact, the existence of $w$ satisfying \eqref{khasmi_original} is \emph{equivalent} to the stochastic completeness of $X$. It is therefore natural to ask whether this is specific to the operators $\Delta u$ and $\Delta u - \lambda u$ or if it is a more general fact, and, in the latter case, how one can take advantage from such an equivalence. This is the starting point of the papers \cite{marivaltorta, maripessoa}.\par
%
A further motivation to study Khas'minskii type potentials comes from the desire to understand the link between the Hessian maximum principles and the theory of stochastic processes. It has been suggested in \cite{prs_overview, prs_milan} that a good candidate to be a probabilistic counterpart of a Hessian principle is the \emph{martingale completeness of $X$}. In fact, one can study the non-explosure property  for a natural class of stochastic processes that includes the Brownian motion: the class of martingales (cf. \cite{emery, stroockvaradhan}). 
\begin{itemize}
\item $X$ is called \emph{martingale complete} if and only if each martingale on $X$ has infinite lifetime almost surely. 
\end{itemize}

Differently from the case of stochastic completeness, there is not much literature on the interplay between martingale completeness and geometry, with the notable exception of \cite{emery}. The picture is still fragmentary and seems to be quite different from the Laplacian case: for instance, a martingale complete manifold must be (geodesically) complete (\cite[Prop. 5.36]{emery}). In \cite[Prop. 5.37]{emery}, by using probabilistic tools M. Emery proved that $X$ is martingale complete provided that there exists $w \in C^2(X)$ satisfying
\begin{equation}\label{khasmi_Hessian_original}
\begin{array}{l}
\disp 0 < w \in C^2(X), \qquad w(x) \ra +\infty \ \text{ as $x$ diverges,} \\[0.2cm]
|\nabla w| \le C, \qquad \nabla \di w \le C \metric \ \text{ on } \, X,
\end{array}
\end{equation}
for some $C>0$. Evidently, this is again a Khas'minskii type property. Although the gradient condition in \eqref{khasmi_Hessian_original} might suggest that the martingale completeness of $X$ be related to the strong Hessian principle, in \cite{prs_overview, prs_milan} the authors give some results to support a tight link to the \emph{weak} Hessian principle. Which Hessian principle relates to martingale completeness, and why? Is \eqref{khasmi_Hessian_original} equivalent to the martingale completeness of $X$? \par
The picture described above for Omori-Yau and Ekeland principles, and for parabolicity, stochastic and martingale completeness, suggest that there might be a general ``duality principle" relating an appropriate maximum principle in the form of ($\mathscr{A}$) on open sets, to the existence of suitable Khas'minskii type potentials. This is in fact the case, and the rest of this note aims to settle the problem in the appropriate framework, to explain our main result (the AK-duality) and describe its geometric consequences. An important starting point is to reduce the regularity of solutions of the relevant differential inequalities.

\subsection{On weak formulations: the case of quasilinear operators}

Formulations of maximum principles at infinity for functions with less than $C^2$ regularity have already been studied in depth in recent years, see \cite{prsmemoirs, prsnonlinear, prs_milan, aliasmastroliarigoli, bmpr}, by using distributional solutions. Due to the appearance of quasilinear operators in Geometric Analysis, a natural class of inequalities to investigate is the following quasilinear one:
\begin{equation}\label{quasilinear}
\Delta_a u \doteq \diver \Big( a(|\nabla u|) \nabla u\Big) \ge b(x)f(u)l(|\nabla u|),
\end{equation}
for $a \in C(\R^+)$, $0< b \in C(X)$, $f \in C(\R)$, $l \in C(\R^+_0)$, considered in \cite{bmpr} in full generality. For instance, the study of graphs with prescribed mean curvature and of mean curvature solitons in warped product ambient space leads to inequalities like 
$$
\diver \left( \frac{\nabla u}{\sqrt{1+|\nabla u|^2}}\right) \ge \frac{b(x)f(u)}{\sqrt{1+|\nabla u|^2}}, 
$$
see Section 1 in \cite{bmpr}. The weak and strong maximum principles are stated in terms of functions solving \eqref{quasilinear} on some upper level set, much in the spirit of (i) at page \pageref{eq_stronghessian}, and are summarized in the next 

\begin{definition}\label{def_maxprinc}
We say that
\begin{itemize}
\item $(bl)^{-1}\Delta_a$ satisfies the \emph{weak maximum principle at infinity} if for each non-constant $u \in \lip_\loc(X)$ bounded above, and for each $\eta< \sup_X u$,
$$
\inf_{\{u> \eta\}} \Big\{\Big(b(x)l(|\nabla u|)\Big)^{-1}\Delta_a u\Big\} \le 0,
$$
and the inequality has to be intended in the following sense: if $u$ solves
\begin{equation}\label{def_weakWMP}
\Delta_a u \ge K b(x)l(|\nabla u|) \qquad \text{weakly on } \, \{u>\eta\},
\end{equation}
for some $K \in \R$, then necessarily $K \le 0$.
\item $(bl)^{-1}\Delta_a$ satisfies the \emph{{strong maximum principle at infinity}} if for each non-constant $u \in C^1(X)$ bounded above, and for each $\eta< \sup_X u$, $\eps>0$,
\begin{equation}\label{def_omegaeta}
\Omega_{\eta,\eps} = \{x\in X \ : \ u(x)>\eta, \, \, |\nabla u(x)|<\eps\} \qquad \text{is non-empty,}
\end{equation}
and
$$
\inf_{\Omega_{\eta,\eps}} \Big\{\Big(b(x)l(|\nabla u|)\Big)^{-1}\Delta_a u\Big\} \le 0,
$$
where, again, the inequality has to be intended in the way explained above.
\end{itemize}
\end{definition}

To present, there exist sharp sufficient conditions both to guarantee the weak and the strong principles for $(bl)^{-1}\Delta_a$. These are explicit, and expressed in terms of the growth of the Ricci curvature (of the type in \eqref{bounds_omoriyau}) or of the volume of geodesic balls in $X$ (resembling \eqref{mimi}). The conditions enable to deduce, among others, sharp Liouville theorems for entire graphs with controlled mean curvature. The interested reader is referred to Theorems 1.2 and 1.7 in \cite{bmpr} for the most up-to-date results, and to \cite{prsmemoirs, aliasmastroliarigoli, bmpr} for applications. While working with distributional solutions is quite effective for the weak principle, it seems not an optimal choice in the presence of a gradient condition because $\Omega_{\eta,\eps}$ in \eqref{def_omegaeta} needs to be open and thus forces to restrict to $C^1$ functions $u$. For our purposes, we found more appropriate to work with upper semicontinuous (USC) viscosity solutions.

\section{The general framework} 
We summarize the picture both for the weak and the strong principles. By property ($\mathscr{A}$) above, they can be rephrased in terms of solutions of a fully nonlinear PDE of the type 
\begin{equation}\label{eq_fullynon}
\mathscr{F} \big( x, u(x), \nabla u(x), \nabla^2 u(x) \big) \ge 0 
\end{equation}
on an open subset $U \subset X$, where $\mathscr{F}$ is continuous in its arguments, elliptic and proper, in the following sense:
$$
\begin{array}{lll}
\text{(degenerate ellipticity)} & \quad \mathscr{F}(x,r,p,A) \ge \mathscr{F}(x,r,p,B) & \quad \text{if } \, A \ge B \ \text{ as a quadratic form}, \\[0.2cm]
\text{(properness)} & \quad \mathscr{F}(x,r,p,A) \le \mathscr{F}(x,s,p,A) & \quad \text{if } \, r \ge s.
\end{array}
$$
For instance, 
$$
\begin{array}{ll}
\mathscr{F} = \tr(A) - f(r) & \qquad \text{for the weak Laplacian case, or } \\[0.2cm]
\mathscr{F} = \max\big\{ \lambda_m(A) - f(r), \ |p|- \xi(-r)\big\} & \qquad \text{for the strong Hessian case,} 
\end{array}
$$
for some (any) $f,\xi$ satisfying \eqref{fxi}. Note that the $4$-ple $(x,r,p,A)$ lies in the set
$$
J^2(X) = \Big\{ (x,r,p,A) \ \ : \ \ x \in X, \ r \in \R, \ p \in T_xX, \ A \in \mathrm{Sym}^2(T_xX) \Big\}, 
$$
called the $2$-jet bundle of $X$, and in what follows, with $J^2_xu$ we denote the $2$-jet of $u$ at $x$, i.e. the $4$-ple $(x,u(x),\nabla u(x), \nabla^2 u(x))$. 

\begin{remark}[\textbf{Regularity of solutions}]\label{rem_regularity}
\emph{Although in the above discussion we dealt with $C^2$ solutions, it will be crucial for us to relax the regularity requirements and consider viscosity solutions: an upper semicontinuous (USC) function $u : X \ra [-\infty, +\infty)$  solves \eqref{eq_fullynon} in the viscosity sense provided that, for every $x$ and for every test function $\phi$ of class $C^2$ in a neighbourhood of $x$ and touching $u$ from above at $x$, that is, satisfying 
$$
\left\{ \begin{array}{l}
\phi \ge u \quad \text{around x}, \\
\phi(x) = u(x), 
\end{array}\right.  \qquad \text{it holds} \qquad \mathscr{F}\big( x, \phi(x), \nabla \phi(x), \nabla^2 \phi(x) \big) \ge 0,
$$
}
\end{remark}

To state the property that encompasses the maximum principles discussed above, it is more convenient for us to exploit the geometric approach to fully-nonlinear PDEs pioneered by N.V. Krylov \cite{krylov} and systematically developed by R. Harvey and B. Lawson Jr. in recent years (\cite{HL_primo, HL_dir, HL_existence}). To the differential inequality \eqref{eq_fullynon}, we associate the closed subset 
\begin{equation}\label{Fsube_associated}
F = \Big\{ (x,r,p,A) \ \ : \ \ \mathscr{F}(x,r,p,A) \ge 0 \Big\} \subset J^2(X).
\end{equation}
The ellipticity and properness of $\mathscr{F}$ imply a positivity and negativity property for $F$ (properties $(P)$ and $(N)$ in \cite{HL_dir}). To avoid some pathological behaviour in the existence-uniqueness theory for the Dirichlet problem for $\mathscr{F}=0$, one also needs a mild topological requirement on $F$ (assumption $(T)$ in \cite{HL_dir}). A subset $F \subset J^2(X)$ satisfying $(P),(N),(T)$ is called a \emph{subequation}: it might be given in terms of a function $\mathscr{F} : J^2(X) \ra \R$, as in \eqref{Fsube_associated}, but not necessarily.\par
A function $u \in C^2(X)$ is said to be $F$-subharmonic if $J^2_x u \in F$ for each $x \in X$. If $u \in USC(X)$, as in  Remark \ref{rem_regularity} we say that $u$ is $F$-subharmonic if, for every test function $\phi \in C^2$ at any point $x$, $J^2_x \phi \in F$. Given an open subset $\Omega \subset M$, we define
$$
F(\Omega) = \Big\{ u \in USC(\Omega) \ \ : \ \ \text{$u$ is $F$-subharmonic on $\Omega$}\Big\},
$$
while, for closed $K$, we set $F(K)$ to denote the functions $u \in USC(K)$ that are $F$-subharmonics on $\mathrm{Int}\,K$.\\
Examples that are relevant for us include the following subsets for $f \in C(\R)$ non-decreasing and denoting with $\lambda_1(A) \le \ldots \le \lambda_m(A)$ the eigenvalues of $A$.

\begin{itemize}
\item[$(\EE 1)$] (\textbf{Eikonal type}). The eikonal $E = \big\{|p| \le 1\big\}$, and its modified version $E_\xi = \big\{|p| \le \xi(r)\big\}$ for $\xi$ satisfying \eqref{fxi}. In view of the properties of $\xi$, note that $E_\xi$-subharmonics must be non-positive.\vspace{0.2cm}
\item[$(\EE 2)$] (\textbf{$k$-subharmonics}). $F= \big\{\lambda_1(A)+ \ldots + \lambda_k(A) \ge f(r)\big\}$, $k \le m$. When $f = 0$, $F$-subharmonic functions are called $k$-plurisubharmonic; these subequations, which naturally appear in the theory of submanifolds, have been investigated for instance in \cite{wu,sha, HL_plurisub,HL_existence}. The class encompasses the subequation $\{ \tr(A) \ge f(r)\}$, related both to the stochastic completeness of $X$ (if $f$ satisfies \eqref{fxi}) and to the parabolicity of $X$ (if $f=0$).\vspace{0.2cm}
\item[$(\EE 3)$] (\textbf{Prescribing eigenvalues}). $F= \big\{\lambda_k(A) \ge f(r)\big\}$, for $k \in \{1,\ldots, \dim X\}$. For $k=m$ (and, as we shall see by duality, $k=1$) this is related to the Hessian principles. In particular, if $f \equiv 0$, the subequations 
$$
F_j = \big\{ \lambda_j(A) \ge 0\big\}
$$
describe the $m$-branches associated to the Monge-Amp\`ere equation $\det(\nabla^2 u) = 0$.
\vspace{0.2cm}
\item[$(\EE 4)$] (\textbf{Branches of $k$-Hessian subequation}). For $\lambda \doteq (\lambda_1,\ldots, \lambda_m) \in \R^m$ and $k \in \{1,\ldots, m\}$, consider the elementary symmetric function
$$
\sigma_k(\lambda) = \sum_{1 \le i_1<\ldots < i_k \le m} \lambda_{i_1}\lambda_{i_2}\cdots \lambda_{i_k}.
$$
Since $\sigma_k$ is invariant by permutation of coordinates of $\lambda$, we can define $\sigma_k(A)$ as $\sigma_k$ being applied to the ordered eigenvalues $\{\lambda_j(A)\}$. According to G\"arding's theory in \cite{G}, $\sigma_k(\lambda)$ is a hyperbolic polynomial with respect to the vector $v = (1,\ldots, 1) \in \R^m$. Denote with 
$$
\mu_1^{(k)}(\lambda) \le \ldots \le \mu_k^{(k)}(\lambda) 
$$
the ordered eigenvalues\footnote{That is, the opposite of the roots of $\mathscr{P}(t) \doteq \sigma_k(\lambda + tv)=0$.} of $\sigma_k$. Clearly, $\mu_j^{(k)}$ is permutation invariant, thus the expression $\mu_j^{(k)}(A)$ is meaningful. As a matter of fact,
$$
F_j = \Big\{ \mu_j^{(k)}(A) \ge f(r)\Big\}, \qquad 1 \le j \le k
$$
is a subequation. In particular, if $f \equiv 0$, $F_1, \ldots, F_k$ are called the branches of the $k$-Hessian equation $\sigma_k(\nabla^2 u) = 0$. The smallest branch $F_1$ can be equivalently described as 
$$
F_1 = \Big\{ \sigma_1(A) \ge 0, \ldots, \sigma_k(A) \ge 0 \Big\}.
$$
Many more examples of this kind arise from hyperbolic polynomials $q(\lambda)$, cf. \cite{HL_garding}.
\vspace{0.2cm} 
\item[$(\EE 5)$] \textbf{Subequations on complex, quaternionic and Cayley manifolds}). If $X$ is an almost complex, Hermitian manifold, the complexified Hessian matrix $A$ splits into pieces of type $(2,0),(1,1)$ and $(0,2)$, and it makes sense to consider the last three examples in terms of the eigenvalues of the hermitian symmetric matrix $A^{(1,1)}$. In particular this includes plurisubharmonic functions, that is, solutions of 
$$
\big\{\lambda_1(A^{(1,1)}) \ge 0\big\}.
$$ 
Analogous examples can be given on quaternionic and octonionic manifolds.\vspace{0.2cm}
\item[$(\EE 6)$] (\textbf{Pucci operators}). For $0 < \lambda \le \Lambda$, the Pucci operators (cf. \cite{caffacabre}) are classically defined as
$$
\begin{array}{l}
\disp \mathcal{P}^+_{\lambda, \Lambda}(\nabla^2u) = \sup \Big\{ \tr(X\cdot \nabla^2 u) \ \ : \ \ \text{$X \in \Sym^2(TX)$ with $\lambda I \le X \le \Lambda I$} \Big\}, \\[0.3cm]
\disp \mathcal{P}^-_{\lambda, \Lambda}(\nabla^2u) = \inf \Big\{ \tr(X\cdot \nabla^2 u) \ \ : \ \ \text{$X \in \Sym^2(TX)$ with $\lambda I \le X \le \Lambda I$} \Big\}.
\end{array}
$$ 
The subequations describing solutions of $\mathcal{P}^\pm_{\lambda, \Lambda}(\nabla^2u) \ge f(u)$ can be defined as follows: denoting with $A^+ \ge 0$ and $A^- \le 0$ the positive and negative part of a symmetric matrix $A = A^+ + A^-$, we can set
$$
\begin{array}{l}
F^+_{\lambda,\Lambda} = \Big\{ \lambda \tr(A^-) + \Lambda \tr(A^+) \ge f(r) \Big\}, \\[0.2cm]
F^-_{\lambda,\Lambda} = \Big\{ \Lambda \tr(A^-) + \lambda \tr(A^+) \ge f(r) \Big\}.
\end{array}
$$
\vspace{0.2cm}
\item[$(\EE 7)$] (\textbf{Quasilinear}). We can also consider viscosity solutions of 
\begin{equation}\label{eq_quasilinear}
\Delta_a u \doteq \mathrm{div}\big(a(|\nabla u|)\nabla u\big) \ge f(u),
\end{equation}
for $a \in C^1(\R^+)$ satisfying 
\begin{equation}\label{ipo_a}
\theta_1(t) \doteq a(t) + ta'(t) \ge 0, \qquad \theta_2(t) \doteq a(t)>0.
\end{equation}
Examples include 
\begin{itemize}
\item[-] the mean curvature operator, describing the mean curvature of the graph hypersurface $\{(x, v(x)) : x \in M\}$ into the Riemannian product $M \times \R$. In this case, $a(t) = (1+t^2)^{-1/2}$;
\item[-] the $q$-Laplacian $\Delta_q$, $q>1$, where $a(t) = t^{q-2}$;
\item[-] the operator of exponentially harmonic functions, where $a(t) = \exp\big(t^2\big)$, considered for instance in \cite{DE};
\end{itemize}
Indeed, expanding the divergence we can set 
\begin{equation*}
F = \overline{\left\{ p \neq 0, \ \tr \big( T(p)A\big) > f(r) \right\}}, 
\end{equation*}
where 
\begin{equation*}
T(p) \doteq a(|p|)\metric + \frac{a'(|p|)}{|p|} p \otimes p = \theta_1(|p|) \Pi_p + \theta_2(|p|) \Pi_{p^\perp},
\end{equation*}
and $\Pi_p, \Pi_{p^\perp}$ are, respectively, the $(2,0)$-versions of the orthogonal projections onto the spaces $\langle p\rangle$ and $p^\perp$. Similarly, we can consider the non-variational, \emph{normalized} quasilinear operator given by 
$$
F = \overline{\left\{ p \neq 0, \ \frac{\tr \big( T(p)A\big)}{\max\{\theta_1(|p|), \theta_2(|p|)\}} > f(r) \right\}}.
$$
In the case of the mean curvature operator, the last subequation represents viscosity solutions of 
$$
\diver \left( \frac{\nabla u}{\sqrt{1+|\nabla u|^2}} \right) \ge \frac{f(u)}{\sqrt{1+ |\nabla u|^2}},
$$
that are related to prescribed mean curvature graphs and mean curvature solitons in warped product spaces, see \cite[Chapter 1]{bmpr}. \vspace{0.2cm}

\item[$(\EE 8)$] (\textbf{$\infty$-Laplacian}). The normalized $\infty$-Laplacian $F= \overline{\big\{p \neq 0, \ |p|^{-2}A(p,p) > f(r)\big\}}$.\vspace{0.2cm}
\end{itemize}

In $(\EE 7)$ and $(\EE 8)$, the necessity to take as $F$ the closure of its interior is made necessary to match property $(T)$, due to the possible singularity of the operator at $p=0$. The above examples can be defined on each Riemannian (complex, quaternionic, octonionic) manifold, since there is no explicit dependence of $F$ from the point $x$, and are therefore called \emph{universal} subequations. To include large classes of subequations with coefficients depending on the point $x$, that can be seen as ``deformations" of universal ones, Harvey and Lawson in \cite{HL_dir} introduced the concept of \emph{local jet-equivalence} between subequations. Without going into the details here, we limit to say that, for instance, any semilinear inequality of the type
$$
a^{ij}(x) u_{ij} + b^i(x) u_i \ge c(x) f(u)
$$
for smooth $a^{ij}, b^i,c$ with $c>0$ on $X$ and $[a^{ij}]$ positive definite at every point, is locally jet-equivalent to the universal example describing solutions of 
$$
\Delta u \ge f(u).
$$
The key fact is that local jet-equivalence allows to transfer properties from the universal example to the subequations locally jet equivalent to it.\par
Supersolutions for $\mathscr{F}$, that is, solutions of 
\begin{equation}\label{eq_supersolution}
\mathscr{F}\big( x,u(x),\nabla u(x), \nabla^2 u(x) \big) \le 0,
\end{equation}
are taken into account starting from the observation that $w = -u$ solves
\begin{equation}\label{eq_menosupersolution}
\widetilde{\mathscr{F}}\big( x,w(x),\nabla w(x), \nabla^2 w(x) \big) \ge 0, 
\end{equation}
with 
$$
\widetilde{\mathscr{F}}(x,r,p,A) = -\mathscr{F}(x,-r,-p,-A).
$$
This suggests to define the \emph{dual subequation}
$$
\widetilde{F} = - \sim \mathrm{Int}(F).
$$
In particular,
$$
\text{if} \quad  F = \big\{ \mathscr{F}(x,r,p,A) \ge 0\big\}, \qquad \text{then} \qquad \widetilde{F} = \big\{ \widetilde{\mathscr{F}}(x,r,p,A) \ge 0\big\}.
$$
Therefore, $u$ is $\widetilde{F}$-subharmonic if $-u$ is a supersolution in the standard, viscosity sense, and $u$ is $F$-harmonic on $\Omega$ if $u \in F(\Omega)$ and $-u \in \widetilde{F}(\Omega)$. The above operator is in fact a duality, in particular 
\begin{equation}\label{duality}
\widetilde{F\cap G} = \widetilde F \cup \widetilde G, \qquad \widetilde{\widetilde{F}} = F  
\end{equation}
for each subequations $F,G$, and $\widetilde{F}$ is a subequation if $F$ is so. Concerning examples $(\EE1)$ to $(\EE 8)$, 
\begin{itemize}
\item In $(\EE 1)$, the dual of the eikonal equation is $\big\{|p| \ge 1\big\}$, that of $E_\xi$ is $\widetilde{E_\xi} = \big\{ |p| \ge \xi(-r)\big\}$.
\item In $(\EE 2)$, 
$$
\widetilde{F} = \left\{\sum_{j=m-k+1}^m \lambda_j(A) \ge f(r)\right\};
$$
in particular, $\big\{\tr(A) \ge f(r)\big\}$ is self-dual: $\widetilde{F} = F$.
\item In $(\EE 3)$, $\widetilde{F} = \big\{ \lambda_{m-k+1}(A) \ge f(r)\big\}$. 
\item In $(\EE 4)$, if $F = \big\{ \mu_j^{(k)}(A) \ge f(r)\big\}$ then $\widetilde{F} = \big\{ \mu^{(k)}_{k-j+1}(A) \ge f(r)\big\}$.
\item In $(\EE 6)$, the dual of $F^\pm_{\lambda, \Lambda}$ is $F^\mp_{\lambda, \Lambda}$.
\item Examples $(\EE7)$ and $(\EE 8)$ are self-dual: $\widetilde{F} = F$.
\end{itemize}

Given a subequation $F$ and for $g \in C(X)$ we shall introduce the \emph{obstacle} subequation
$$
F^g = F \cap \big\{ r \le g(x)\big\},
$$
that describes $F$-subharmonic functions lying below the obstacle $g$. Note that, since the dual of $\big\{ r \le g(x)\big\}$ is $\{ r \le -g(x)\}$, $\widetilde{F^g}$ describes functions $u$ that are $\widetilde{F}$-subharmonic on the upper set $\{ u(x) > g(x)\}$. As we shall see in a moment, functions in $\widetilde{F^0}$ will be used to describe the property that unifies the maximum principles at infinity described above, and it is therefore expectable, by duality, that obstacles subequations play an important role in our main result.

\begin{definition}
Let $F \subset J^2(X)$ be a subequation. Given $\Omega \Subset X$ open, $g \in C(\overline \Omega)$ and $\phi \in C(\partial \Omega)$ with $\phi \le g$ on $\partial \Omega$, a function $u \in C(\overline{\Omega})$ is said to solve the \emph{obstacle problem} with obstacle $g$ and boundary value $\phi$ if
$$
\left\{ \begin{array}{l}
u \quad \text{is $F^g$-harmonic on $\Omega$} \\[0.2cm]
u = \phi \quad \text{on } \, \partial \Omega.
\end{array}\right.
$$
\end{definition}

%

%
%
%
%
%

\section{Ahlfors, Khas'minskii properties and the AK-duality}\label{sec_teoprinci}

The definition of the next property is inspired by the original work of Ahlfors \cite{ahlforssario}, as well as by the recent improvements in \cite{aliasmastroliarigoli,aliasmirandarigoli, imperapigolasetti}. 

\begin{definition}\label{def_ahlfors}
A subequation $H\subset J^2(X)$ is said to satisfy the \emph{Ahlfors property} if, having set $H_0 = H \cup \{r \le 0\}$, for each $U \subset X$ open with non-empty boundary and for each $u \in H_0(\overline{U})$ bounded from above and positive somewhere, it holds
$$
\sup_{\partial U} u^+ \equiv \sup_{\overline{U}} u.
$$
\end{definition}

Roughly speaking, when $u$ is $H$-subharmonic on the set $\{u>0\}$, the Ahlfors property means that its supremum is attained on the boundary of $U$. 

\begin{example}\label{ex_importante}
\emph{We consider the following subequations: 
\begin{itemize}
\item[1)] If $F = \{ \tr(A) \ge 0\}$, in view of Ahlfors' characterization \cite{ahlforssario}, the Ahlfors property for $\widetilde{F}$ ($=F$) is a version, for viscosity solutions, of the property characterizing the parabolicity of $X$ in \cite{ahlforssario}. Similarly, if $F = \{\tr(A) \ge f(r)\}$ for $f$ satisfying \eqref{fxi}, the Ahlfors property for $\widetilde{F}$ can be seen as a viscosity version of the weak Laplacian principle, that is, of the stochastic completeness of $X$. As a matter of fact (cf. \cite{maripessoa}), in both of the cases the property still \emph{characterizes} the parabolicity, respectively the stochastic completeness, of $X$.
\item[2)] The Ahlfors property for the dual eikonal $\widetilde{E} = \big\{ |p| \ge 1\big\}$ can be viewed as a viscosity version of Ekeland's principle. Its equivalence to the original Ekeland's principle, hence to geodesic completeness, is one of the applications of our main result below.
\item[3)] Consider the subequations $F = \big\{ \lambda_1(A) \ge f(r)\big\}$ and $E_\xi = \big\{ |p| \le \xi(r)\big\}$, for $(f,\xi)$ satisfying \eqref{fxi}. Then, the Ahlfors property for the dual
$$
\begin{array}{lcl}
\disp \widetilde{ F \cap E_\xi} & = & \disp \widetilde{F} \cup \widetilde{E_\xi} \ \ = \ \ \big\{ \lambda_m(A) \ge f(r)\big\} \cup \big\{ |p| \ge \xi(-r)\big\} \\[0.3cm]
& = & \disp \Big\{ \max\big\{ |p| - \xi(-r), \ \lambda_m(A) - f(r) \big\} \ge 0\Big\}
\end{array}
$$
can be seen as a viscosity analogue of the strong Hessian principle. Analogously, the Ahlfors property for $\widetilde{F} \cup \widetilde{E_\xi}$ with $F = \big\{ \tr(A) \ge f(r)\}$ is a natural, viscosity version of Yau's strong Laplacian principle. Differently from the examples in 1), it is \emph{not known} whether these Ahlfors properties are, in fact, equivalent to the classical strong Hessian and Laplacian principles for $C^2$ solutions.   
\end{itemize}
}
\end{example}
A comment is in order: although the above viscosity versions might be strictly stronger than the corresponding classical ones, all of the known geometric conditions that guarantee the weak and strong Hessian or Laplacian principles in the $C^2$ case \emph{also ensure} their viscosity counterparts. Therefore, passing to the viscosity realm does not prevent from geometric applications, and indeed is able to uncover new relations. To see them, we shall introduce the Khas'minskii properties, that generalize \eqref{strongKhasminskii} and \eqref{khasmi_original}. Hereafter, a pair $(K,h)$ consists of 
\begin{itemize}
\item[-] a smooth, relatively compact open set $K \subset X$;
\vspace{0.1cm} 
\item[-] a function $h \in C(X\backslash K)$ satisfying $h < 0$ on $X\backslash K$ and $h(x) \ra -\infty$ as $x$ diverges. 
\end{itemize}

\begin{definition}\label{def_khasmi} A subequation $F\subset J^2(X)$ satisfies the \emph{Khas'minskii property} if, for each pair $(K,h)$, there exists a function $w$ satisfying:
\begin{equation}
\begin{array}{l}
\disp w \in F(X\backslash K), \quad h \le w \leq 0 \quad \text{on } \, X \backslash K, \quad \text{and} \quad w(x) \ra -\infty \quad \text{ as $x$ diverges.}
\end{array}
\end{equation}
Such a function $w$ is called a \emph{Khas'minskii potential} for $(K,h)$.
\end{definition}

Loosely speaking, $F$ has the Khas'minskii property if it is possible to construct $F$-subharmonic exhaustions that decay to $-\infty$ as slow as we wish. In practice, checking the Khas'minskii property  might be a hard task, and often, from the geometric problem under investigation, one is just able to extract \emph{some} of the Khas'minskii potentials. This motivates the following definition (cf. the recent \cite{mmr_berestycki}). 
\begin{definition}\label{def_weakkhasmi}
A subequation $F\subset J^2(X)$ satisfies the \emph{weak Khas'minskii property} if there exist a relatively compact, smooth open set $K$ and a constant $C \in \R \cup \{+\infty\}$ such that, for each $x_0 \not \in \overline{K}$ and each $\eps>0$, there exists $w$ satisfying 

\begin{equation}\label{eq_WK}
\begin{array}{l}
\disp w \in F(X\backslash K), \qquad w \leq 0 \quad \text{ on } \, X \backslash K, \qquad w(x_0) \ge -\eps, \qquad \disp \limsup_{x \ra \infty} w(x) \le -C. 
\end{array}
\end{equation}
We call such a $w$ a \emph{weak Khas'minskii potential} for the triple $(\varepsilon,K,\{x_0\})$.
\end{definition} 
\begin{remark}
\emph{When $C= +\infty$ and $F$ is scale invariant (that is, it is fiber-wise a cone), the condition over $\eps$ in \eqref{eq_WK} can be avoided by a simple rescaling. 
}
\end{remark} 

\begin{example}
\emph{The existence of $w$ in \eqref{strongKhasminskii}, for $G$ satisfying \eqref{ipo_G_SMP}, implies that a weak Khas'minskii property holds for the subequation
$$
\big\{ \tr(A) \ge G(-r) \big\} \cap \big\{ |p| \le G(-r) \big\}.
$$
Indeed, the weak Khas'minskii potentials can be constructed by suitably rescaling and modifying the function  $-w$. Up to playing with $w$ and $G$, the above is equivalent to the weak Khas'minskii property for 
$$
\big\{ \tr(A) \ge f(r) \big\} \cap \big\{ |p| \le \xi(r) \big\},
$$
for some (any) $(f,\xi)$ satisfying \eqref{fxi}. Similarly, the existence of $w$ in \eqref{khasmi_original} implies the weak Khas'minskii property for $F = \big\{\tr(A) \ge f(r)\big\}$.
}
\end{example}

We are ready to state our main result, the Ahlfors-Khas'minskii duality (shortly, AK-duality), Theorems 4.3 and 4.10 in \cite{maripessoa}. It applies to subequations $F$ on $X$ that are locally jet-equivalent to a universal one and satisfy a few further assumptions. Some of them are merely technical and will not be described here. Their  validity characterizes the set of \emph{admissible} subequations, that is still quite general. For instance, each of the examples in $(\EE 2), \ldots, (\EE 6)$, and the subequations locally jet-equivalent to them, are admissible provided that $f$ satisfies \eqref{fxi}.

\begin{theorem}\label{teo_main}
Let $F \subset J^{2}(X)$ be an admissible subequation, locally jet-equivalent to a universal one. Assume that 
\begin{itemize}
\item[$(\HH 1)$] $\ $ negative constants are strictly $F$-subharmonic; \vspace{0.1cm}
\item[$(\HH 2)$] $\ $ $F$ satisfies the comparison theorem: whenever $\Omega \Subset X$ is open, $u \in F(\Omega)$, $v \in \widetilde{F}(\Omega)$, 
$$ 
u+v \leq 0 \quad \text{on } \ \partial \Omega \quad \Longrightarrow \quad u + v \leq 0 \quad \text{on } \ \Omega.
$$
\end{itemize}
Then, AK-duality holds for $F$ and for $F \cap E_\xi$ for some (any) $\xi$ satisfying \eqref{fxi}, i.e.,
\begin{equation*}
\begin{array}{c}
F \ \mbox{ satisfies $(K)$} \\
(\mbox{Khas'minskii prop.})
\end{array}
\quad \Longleftrightarrow  
\quad 
\begin{array}{c}
F \ \mbox{ satisfies $(K_\weak)$} \\
(\mbox{weak Khas'minskii prop.})
\end{array}
\quad \Longleftrightarrow  
\quad 
\begin{array}{c}
\widetilde{F} \ \mbox{ satisfies $(A)$} \\
(\mbox{Ahlfors prop.}) 
\end{array},
\end{equation*}
and 
$$
F \cap E_\xi \ \mbox{ satisfies $(K)$} \quad \Longleftrightarrow \quad F \cap E_\xi \ \mbox{ satisfies $(K_\weak)$} \quad \Longleftrightarrow \quad \widetilde{F} \cup \widetilde{E_\xi} \ \mbox{ satisfies $(A)$} .
$$
\end{theorem}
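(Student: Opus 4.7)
I would prove the cycle $(K) \Rightarrow (K_\weak) \Rightarrow (A) \Rightarrow (K)$, where $(A)$ abbreviates the Ahlfors property for the dual $\widetilde F$. The parallel chain for $F \cap E_\xi$ is obtained by running the same cycle with $F \cap E_\xi$ in place of $F$: the duality identity $\widetilde{F \cap E_\xi} = \widetilde F \cup \widetilde{E_\xi}$ from \eqref{duality} rewrites the Ahlfors condition in the claimed form, and one checks that $F \cap E_\xi$ inherits admissibility, local jet-equivalence to a universal example, and the hypotheses $(\HH 1), (\HH 2)$ from $F$ (the negativity of $\xi$ on $\R^+$ keeps negative constants strictly $E_\xi$-subharmonic, and the comparison theorem is stable under intersections). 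The implication $(K) \Rightarrow (K_\weak)$ is immediate: given $\eps$, $K$ and $x_0 \notin \overline K$, apply $(K)$ to any pair $(K,h)$ with $h(x_0) \ge -\eps$, $h<0$ off $K$ and $h\to -\infty$ at infinity; the resulting Khas'minskii potential qualifies as a weak one with $C = +\infty$.

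\textbf{From $(K_\weak)$ to $(A)$.} Assume for contradiction that there exist an open $U \subset X$ with non-empty boundary and $u \in \widetilde F_0(\overline U)$ bounded above, positive somewhere, with $M := \sup_{\overline U} u > \eta_0 := \sup_{\partial U} u^+$. Fix $\eta \in (\eta_0,M)$ and a point $x_0 \in U$ with $u(x_0) > \eta$, then invoke $(K_\weak)$ at $x_0$ with $\eps \in (0, u(x_0)-\eta)$ to produce $w \in F(X \setminus K)$ with $w \le 0$, $w(x_0) \ge -\eps$ and $w(x) \to -\infty$. The USC function $u+w$, restricted to $W := \{u > \eta\} \cap (X \setminus K)$, is bounded above and tends to $-\infty$ at infinity, so it attains its supremum. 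I would then apply the comparison principle $(\HH 2)$ to the pair ``$F$-subharmonic $w$ plus $\widetilde F$-subharmonic $u$'' on relatively compact exhaustions of $W$. The boundary $\partial W$ decomposes into $\partial U$ (where $u \le \eta_0 < \eta$), the internal level $\{u=\eta\}$ (where $u=\eta$ and $w \le 0$), and $\partial K$, which is controlled by enlarging $K$ at the outset and using a local barrier supplied by $(\HH 1)$. All three contributions force $u+w \le \eta$ throughout $W$, contradicting $u(x_0)+w(x_0) > \eta$.

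\textbf{From $(A)$ to $(K)$: stacking obstacle solutions.} This is the heart of the theorem. Fix the pair $(K,h)$ and an exhaustion $K \Subset \Omega_1 \Subset \Omega_2 \Subset \cdots$ of $X$ by smoothly bounded sets. On each $\Omega_n \setminus K$ I would solve an obstacle problem of the form
\begin{equation*}
w_n \text{ is } F^{g_n}\text{-harmonic on } \Omega_n \setminus K, \qquad w_n = 0 \text{ on } \partial K, \qquad w_n = g_n \text{ on } \partial \Omega_n,
\end{equation*}
where $g_n \in C(\overline{\Omega_n \setminus K})$ is a continuous obstacle interpolating between $0$ on $\partial K$ and a suitable truncation of $h$ near $\partial \Omega_n$, with $g_n \to h$ pointwise. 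Existence follows from Perron's method: $(\HH 1)$ supplies large negative constants as admissible subsolutions, while $(\HH 2)$ together with admissibility give the comparison needed to make Perron's envelope the unique $F^{g_n}$-harmonic function with the prescribed boundary data. Monotonicity in $n$ and the uniform trapping $h \le w_n \le 0$ yield a limit $w \in F(X \setminus K)$ with $h \le w \le 0$. The decisive remaining step is $w(x) \to -\infty$: here one exploits that each $w_n$ is also $\widetilde F$-subharmonic on its non-contact set $\{w_n < g_n\}$, so if $w$ failed to exhaust $-\infty$ then a suitable translate of $-w$ would furnish a bounded, positive-somewhere $\widetilde F$-subharmonic on an open set whose supremum is strictly larger than its boundary supremum, directly contradicting $(A)$.

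\textbf{Main obstacle.} I expect the last step to be the technical core, because it is precisely where the duality must emerge: the Ahlfors control on $\widetilde F$-subharmonics has to be converted into an exhaustion estimate for solutions of obstacle problems built from $F$-subharmonics. This demands (i) bilateral control on $w_n$ from barriers, (ii) passage to the limit preserving both $F$-subharmonicity and the partial $\widetilde F$-subharmonicity off the contact sets, and (iii) packaging the resulting function into the exact form required by $(A)$. I would expect the viscosity framework, combined with the stability of admissibility and $(\HH 1), (\HH 2)$ under intersection with $E_\xi$, to let both statements (for $F$ and for $F \cap E_\xi$) be handled by a single argument.
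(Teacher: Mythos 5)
Your overall architecture — the cycle $(K)\Rightarrow(K_\weak)\Rightarrow(A)\Rightarrow(K)$, reducing the $E_\xi$ chain to the first one via $\widetilde{F\cap E_\xi}=\widetilde F\cup\widetilde{E_\xi}$, a Phr\'agmen--Lindel\"of style contradiction for $(K_\weak)\Rightarrow(A)$, and a stacking of obstacle problems for $(A)\Rightarrow(K)$ — is indeed the paper's strategy. However, in the crucial implication $(A)\Rightarrow(K)$ there are two genuine gaps that the paper's construction is specifically designed to circumvent, and which your proposal does not handle.

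First, the barrier issue on $\partial K$ cannot be dismissed by ``enlarging $K$ and using a local barrier supplied by $(\HH1)$.'' The constant function $-i-1$ is an admissible subsolution by $(\HH1)$, but to run Perron's method one also needs upper barriers at every boundary point, and $\partial K$ is typically \emph{concave} in the outward direction (think of a geodesic ball), which precludes barriers for large classes of subequations such as $(\EE 2)$--$(\EE 4)$. The paper resolves this by surgically replacing the interior of $K$ with a compact piece containing a Euclidean ball $\mathbb B$, so that $\partial K'=\partial\mathbb B$ is convex toward $X\setminus K'$, and by correspondingly deforming $F$ in the gluing region; only after this modification do barriers exist. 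A side effect is that the modified manifold and subequation may \emph{lose} the comparison property $(\HH2)$, so Perron's method produces only ``almost solutions'' (subharmonic in $F^{g_j}$, with negative of the USC envelope subharmonic in $\widetilde{F^{g_j}}$), not genuine $F^{g_j}$-harmonics; your claim that $(\HH2)$ plus admissibility guarantee uniqueness and harmonicity after modification is therefore too optimistic, and the maximality of Perron envelopes must replace comparison when passing to the limit near $K'$.

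Second, your final step — arguing that if $w$ failed to tend to $-\infty$ then ``a suitable translate of $-w$'' would violate $(A)$ — does not go through as stated. The limit $w$ is only $F$-subharmonic; $-w$ is not $\widetilde F$-subharmonic. The $\widetilde{F^{g_n}}$-subharmonicity of $(-w_n)^*$ is not stable under the limit you take, because if $w_n\downarrow w$ then $-w_n\uparrow -w$, and \emph{increasing} limits of viscosity subsolutions are not subsolutions. The paper's nested double-indexed construction (inner index $j$ with obstacles $g_j=w_i+\lambda_j$, $\lambda_j\uparrow0$, outer index $i$ forcing $w_i=-i$ outside a compact set) is precisely engineered so that the relevant dual sequence $v_j=(-u_j)^*-i$ is \emph{decreasing}, hence its limit $v$ lies in $\widetilde{F^0}(X\setminus K')$, and Ahlfors applied to $v$ gives $v\equiv0$ outside a compact set, hence $u_j\uparrow-i$ there. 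Your single-pass construction lacks this monotone dual sequence and hence has no clean object on which to apply $(A)$.
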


Seeking to clarify the role of each assumption in the AK-duality, we briefly examine the importance of each one. 

\begin{remark}[\textbf{On assumption $(\HH 1)$}] 
\emph{This property holds for each of $(\EE 2), \ldots, (\EE 6)$ provided that $f$ satisfies \eqref{fxi}, and it is important to ensure the validity of the finite maximum principle: functions $u \in \widetilde{F^0}(Y)$ cannot achieve a local positive maximum. The latter is crucial for our proof to work. 
}
\end{remark}
\begin{remark}[\textbf{On assumption $(\HH 2)$}] 
\emph{This is delicate to check, and curiously enough it plays a role just in the proof of $(K_\weak) \Rightarrow (A)$. Comparison holds for uniformly continuous subequations which are strictly increasing in the $r$ variable, a case that covers examples $(\EE 2), \ldots, (\EE 5)$ as well as $(\EE 6)$, see \cite[Thm. 2.25]{maripessoa} for details\footnote{In \cite{maripessoa}, the uniform continuity of the Pucci operators in $(\EE 6)$ is not explicitly stated but can be easily checked. For instance, in the case of $\mathcal{P}^+_{\lambda,\Lambda}$, referring to Definition 2.23 in \cite{maripessoa} and using the  min-max definition, 
$$
\mathcal{P}^+_{\lambda,\Lambda}(B) \ge \disp \mathcal{P}^+_{\lambda,\Lambda}(A) - \mathcal{P}^+_{\lambda,\Lambda}(A-B) \ge \mathcal{P}^+_{\lambda,\Lambda}(A) - \Lambda \tr\big((A-B)_+\big). 
$$
If $\|(A-B)_+\| < \delta$, then $\mathcal{P}^+_{\lambda,\Lambda}(B) \ge \mathcal{P}^+_{\lambda,\Lambda}(A) - m\Lambda \delta$, that proves the uniform continuity of $F_{\lambda,\Lambda}^+$. The case of $F_{\lambda, \Lambda}^-$ is analogous.}. The uniform continuity resembles the classical condition 3.14 in \cite{cil}. Regarding examples $(\EE 7)$ and $(\EE 8)$, the worse dependence on the gradient term makes comparison much subtler. One can check the comparison theorem for the universal subequation in $(\EE 8)$, even with $f \equiv 0$, by means of other interesting methods, cf. \cite[Thm. 2.27]{maripessoa}. As for $(\EE 7)$, on Euclidean space the validity of comparison for strictly increasing $f$ is a direct application of the classical theorem on sums (i.e. Ishii's Lemma, \cite{cil} and \cite[Thm. C.1]{HL_dir}).  In a Riemannian setting, Ishii's Lemma uses the infimal convolutions with the squared distance function $r^2(x,y)$ on $X \times X$, and in neighbourhoods where the sectional curvature is negative the Hessian of $r^2$ is \emph{positive} to second order on pairs of parallel vectors. The error term produced by such positivity can be easily controlled for normalized quasilinear operators, but in the unnormalized case one has to require the boundedness of the eigenvalues $\theta_1, \theta_2$ in order to avoid further a-priori bounds on the subsolutions and supersolutions (like Lipschitz continuity of either one of them). Nevertheless, we note that the boundedness of $\theta_1,\theta_2$ notably includes the mean curvature operator.
}
\end{remark}

Summarizing, we have

\begin{corollary}\label{cor_main}
The AK-duality holds both for $F$ and for $F \cap E_\xi$, with $\xi$ satisfying \eqref{fxi}, in each of the following cases:
\begin{itemize}
\item[-] $F$ is locally jet-equivalent to any of $(\EE 2), \ldots, (\EE 6)$;
\item[-] $F$ is the normalized quasilinear example in $(\EE 7)$, or $F$ is the unnormalized example with $\theta_1,\theta_2 \in L^\infty(\R^+)$;
\item[-] $F$ is the universal example in $(\EE 8)$,
\end{itemize}
and, in each case, $f$ satisfies \eqref{fxi}. Furthermore, the AK-duality holds for the eikonal subequations in $(\EE 1)$.
\end{corollary}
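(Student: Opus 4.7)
My plan is to derive the corollary as a catalogue of applications of Theorem \ref{teo_main}: for each family in items $(\EE 2)$ through $(\EE 8)$ I verify admissibility, local jet-equivalence to a universal model, $(\HH 1)$, and $(\HH 2)$, and then appeal to Theorem \ref{teo_main} directly to obtain AK-duality for both $F$ and $F \cap E_\xi$. The eikonal subequations of $(\EE 1)$ lack $r$-monotonicity and do not fit the theorem, so I would treat them by a short separate argument relying on the classical characterization of geodesic completeness.

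For the examples in $(\EE 2)$--$(\EE 6)$, local jet-equivalence is automatic because each defining expression is built from permutation-invariant functions of the Hessian eigenvalues, so the example already is the universal model. Admissibility is a bookkeeping check of the continuity and topological conditions, valid once $f$ satisfies \eqref{fxi}. For $(\HH 1)$, a negative constant $u \equiv c < 0$ has $2$-jet $(x, c, 0, 0)$: the relevant eigenvalue combination or G{\aa}rding branch vanishes on $A = 0$, while $f(c) < 0$ strictly since $f$ is odd and positive on $\R^+$; hence $(x, c, 0, 0)$ lies in the interior of $F$. For $(\HH 2)$, the subequation is uniformly continuous in $(r, p, A)$ and strictly monotone in $r$ because $f$ is strictly increasing; these are exactly the hypotheses of \cite[Thm.\ 2.25]{maripessoa}. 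The uniform-continuity computation for the Pucci case $(\EE 6)$ is the short min-max argument sketched in the remark above.

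For the quasilinear case $(\EE 7)$ and the normalized $\infty$-Laplacian of $(\EE 8)$, admissibility and $(\HH 1)$ go through as before (the interior of the closure is still described by the strict inequality, and $f(c) < 0$ for $c<0$). The main obstacle is $(\HH 2)$: in a Riemannian setting Ishii's lemma produces an error term controlled by the Hessian of the squared distance, which interacts poorly with the coefficient $T(p)$ when $|p|$ is large. For the normalized form of $(\EE 7)$ the normalization cancels this growth; for the unnormalized form the hypothesis $\theta_1, \theta_2 \in L^\infty(\R^+)$ yields a uniform bound that absorbs the curvature error; and for the normalized $\infty$-Laplacian comparison is obtained by the homogeneity-based argument of \cite[Thm.\ 2.27]{maripessoa}, bypassing Ishii's lemma altogether. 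In each of these three cases we then invoke Theorem \ref{teo_main}.

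Finally, for the eikonal subequations of $(\EE 1)$ I would argue directly. A Khas'minskii potential for $E = \{|p| \le 1\}$ relative to a pair $(K, h)$ is a $1$-Lipschitz exhaustion bounded above by $0$ and below by $h$; such a potential exists for every admissible $h$ if and only if the distance function $\rho(\cdot, o)$ from some base point is proper, i.e., if and only if $X$ is geodesically complete. On the dual side, the Ahlfors property for $\widetilde E = \{|p| \ge 1\}$ is, up to the standard viscosity reformulation, Ekeland's principle of Definition \ref{def_ekeland_classico}, which by the Weston--Sullivan theorem is equivalent to completeness. The case of $E_\xi$ reduces to $E$ via a monotone change of variable in $r$ that turns an $E_\xi$-potential into a $1$-Lipschitz exhaustion after composition, and vice versa. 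Both sides of the duality for the eikonal are thus equivalent to the single geometric property of geodesic completeness, so the equivalence is immediate; the only genuinely hard step in the whole proof is the comparison verification in $(\EE 7)$--$(\EE 8)$.
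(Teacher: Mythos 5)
Your treatment of $(\EE 2)$--$(\EE 8)$ follows the same route as the paper: verify admissibility, $(\HH 1)$ (negative constants are strictly $F$-subharmonic because the eigenvalue/G\aa rding expression vanishes at $A=0$ while $f(c)<0$), and $(\HH 2)$ via the uniform-continuity-plus-strict-$r$-monotonicity criterion of \cite[Thm.\ 2.25]{maripessoa} for $(\EE 2)$--$(\EE 6)$, via bounded eigenvalues $\theta_1,\theta_2$ or normalization for $(\EE 7)$, and via the dedicated argument of \cite[Thm.\ 2.27]{maripessoa} for $(\EE 8)$; then apply Theorem \ref{teo_main}. This matches the paper's two remarks on $(\HH 1)$ and $(\HH 2)$ preceding the corollary, so that part is fine.

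The eikonal case $(\EE 1)$, however, contains a genuine gap. You close the argument by asserting that ``the Ahlfors property for $\widetilde E = \{|p| \ge 1\}$ is, up to the standard viscosity reformulation, Ekeland's principle,'' and hence equivalent to geodesic completeness by Weston--Sullivan. But the paper explicitly cautions against exactly this identification: just before Theorem \ref{teo_ekeland_intro} it says that the heuristic ``$u + \tfrac{1}{k}w$'' argument ``does not give a formal proof of the equivalence between the Ahlfors property for $\widetilde E$ and the Ekeland principle,'' and that this equivalence ``follows from the next application of Theorem \ref{teo_main}.'' In other words, ``Ahlfors for $\widetilde E \Leftrightarrow$ completeness'' is a \emph{consequence} of the AK-duality, not an independently available fact you can feed back in to prove the duality for $E$. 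Since the eikonal is degenerate in $A$ and $r$-independent, Theorem \ref{teo_main} (and its comparison hypothesis $(\HH 2)$) is unavailable, so you genuinely need a separate proof of that equivalence, which your proposal does not supply. The Khas'minskii side, by contrast, is fine: a $1$-Lipschitz exhaustion with $h \le w \le 0$ exists for every admissible pair $(K,h)$ iff $X$ is complete, via a $1$-Lipschitz regularization of $-h$ composed with $\dist(\cdot,K)$, but establishing the Ahlfors side independently of the duality is the step you would actually have to do, and it is not ``immediate.''

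One smaller imprecision: in the first bullet you treat ``local jet-equivalence'' as something to be proved, remarking that the examples ``already are the universal model.'' In the corollary the first bullet covers any $F$ that is locally jet-equivalent to one of $(\EE 2)$--$(\EE 6)$; the jet-equivalence is a \emph{hypothesis}, and the point is that $(\HH 1)$, $(\HH 2)$ and admissibility are stable under it and under the gluing construction in the proof of Theorem \ref{teo_main}. Your phrasing suggests you only cover the universal examples themselves, which would be strictly weaker than the statement.
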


\begin{proof}[{\bf Sketch of the proof of the AK-duality:}] Since $(K) \Rightarrow (K_\weak)$ is obvious, we shall prove $(K_w) \Rightarrow (A)$ and $(A) \Rightarrow (K)$. The proof of the first implication is inspired by a classical approach that dates back to Phr\'agmen-Lindeloff type theorems in classical complex analysis, and we therefore concentrate on $(A) \Rightarrow (K)$.\\
Fix a pair $(K,h)$, and a smooth exhaustion $\{D_j\}$ of $X$ with $K \subset D_1$. Our desired Khas'minskii potential $w$ will be constructed as a locally uniform limit of a decreasing sequence of USC functions $\{w_i\}$, such that $w_0=0$ and for each $i \ge 1$ we have:
\begin{equation}\label{cond_induction_w}
\begin{array}{ll}
(a) & \quad w_i \in F(X\backslash K), \qquad w_{i} = (w_i)_* = 0 \quad \text{ on } \, \partial K; \\[0.2cm]
(b) & \quad w_i \geq -i \quad \text{ on } \, X\backslash K, \qquad w_i = -i \quad \text{outside a compact set $C_{i}$ containing $D_{i}$;} \\[0.2cm]
(c) & \quad \left( 1- 2^{-i-2}\right)h < w_{i+1} \le w_i \le 0 \quad \text{on } \ X \backslash K, \qquad \|w_{i+1}-w_i\|_{L^{\infty}(D_{i}\backslash K)} \leq \frac{\varepsilon}{2^{i}}.
\end{array}
\end{equation}
With the above properties, the sequence $\{w_i\}$ is locally uniformly convergent on $X\backslash K$ to some function $w \in F(X\backslash K)$ with $h \le w \le 0$ on $X \backslash K$ and satisfying $w(x) \ra -\infty$ as $x$ diverges, that is, to the desired Khas'minskii potential. 

Fix $w=w_i$. We build $w_{i+1}$ inductively via a sequence of obstacle problems, an idea inspired by \cite{valtorta, marivaltorta}: we fix obstacles $g_j = w + \lambda_j$, for some sequence $\{\lambda_j\} \subset C(X\backslash K)$ such that
\begin{equation}\label{def_hj}
\left\lbrace
\begin{array}{l}
\disp 0 \ge \lambda_j \ge -1, \quad \lambda_j = 0 \ \text{ on } \, K, \quad \lambda_j =-1 \ \text{ on } \, X \backslash D_{j-1}, \\[0.1cm]
\disp \text{$\{\lambda_j\}$ is an increasing sequence, and $\lambda_j \uparrow 0$ locally uniformly,}
\end{array}\right.
\end{equation}
and search for solutions of the obstacle problem
\begin{equation}\label{obstacle_uj_full}
\left\{ \begin{array}{l}
u_j \qquad \text{is $F^{g_j}$-harmonic on $D_j \backslash K$}, \\[0.2cm]
u_j = 0 \quad \text{on } \, \partial K, \qquad u_j = -i-1 \quad \text{on } \, \partial D_j.
\end{array}\right.
\end{equation}
However, in some relevant cases we cannot fully solve \eqref{obstacle_uj_full}. The first problem we shall consider is the absence of barriers, needed to prove the existence of $u_j$ via Perron's method. No problem arise on $\partial D_j$, since the constant function $-i-1$ is $F^{g_j}$-subharmonic by $(\HH 1)$. However, since we are working in the complement of a compact set $K$ (think of $K$ being a small geodesic ball, for instance), $\partial K$ might be concave in the outward direction, that in general prevents from having barriers there. To overcome this problem, we modify $X$ inside of $K$ by gluing a compact manifold $Y$ that is Euclidean in a sufficiently small ball $\mathbb{B}$. The gluing only involves small annuli inside of $\mathbb{B}$ and $K$, with the new metric coinciding with those of $X$ and $Y$ outside of the gluing region. In particular, the new manifold is Euclidean in a neighbourhood of $\partial \mathbb{B}$. In this way, replacing $K$ by $K' = Y \backslash \mathbb{B}$, $X \backslash K$ embeds isometrically into $X \backslash K'$ and the latter has a \emph{convex} boundary isometric to $\partial \mathbb{B}$. Because of a technical assumption included in those defining the admissibility of $F$, this is enough to produce barriers on $\partial K'$. Once we perform this change, we suitably modify the subequation $F$ preserving it outside the gluing region, and making it, on $K'$, the universal Riemannian subequation to which $F$ is locally jet equivalent. Although these modifications change in several ways the manifold and the subequation, they are stable to preserve the Ahlfors property for $\widetilde F$ as well as the assumption $(\HH 1)$. The price to pay is that we may lose the comparison property $(\HH 2)$, since comparison is very sensitive to the geometry of $X$, at least for some relevant operators like those in $(\EE 2), (\EE 3), (\EE 4)$. This is the main reason why, generally, we cannot fully solve \eqref{obstacle_uj_full}. However, with barriers finally available, Perron's method yields an ``almost solution" $u_j$ of the obstacle problem on $X \backslash K'$ (see \cite{HL_dir}, \cite[Thm. 3.3]{maripessoa}), that is, $u_j$ solves 
\begin{equation}\label{dirichlet_step1}
\left\{
\begin{array}{ll}
u_{j} \in F^{g_{j}}(\overline{D_{j}\backslash K'}), \quad (-u_{j})^* \in \widetilde{F^{g_{j}}}(\overline{D_{j}\backslash K'}),\\[0.1cm]
u_{j} = (u_{j})_* = 0 \qquad \mbox{on} \ \ \partial K' ,\\[0.1cm]
u_{j} = (u_{j})_* = -i-1 \qquad \mbox{on} \ \ \partial D_{j}.
\end{array}
\right.
\end{equation}

We extend $u_j$ outside $D_j$ by setting $u_{j} \doteq -i-1$, and define $v_{j} \doteq (-u_{j})^* -i$. By the definition of Perron's solution, the sequence $\{v_j\}$ is decreasing on $X \backslash K'$. Thus, passing to the limit using that $g_j \ra w \ge -i$ as $j \ra \infty$, $w = -i$ outside of $C_i$, we get 
$$
v_j \downarrow v \in \widetilde{F^{0}}( X\backslash K'), \quad \text{with} \quad \left\{\begin{array}{ll}
-i \le v \le 1 & \quad \text{on } \, X \backslash K', \\[0.1cm]
v = -i < 0 & \quad \text{on } \, \partial K', \\[0.1cm]
v \ge 0 & \quad \text{on } \, X \backslash C_{i}.
\end{array}\right.
$$
Here is the crucial point where the Ahlfors property enters: in fact, using Ahlfors on $X \backslash K'$ we infer that $v \equiv 0$ outside of $C_i$, and by the USC-version of Dini's theorem, 
$$
v_j \downarrow 0 \qquad \text{locally uniformly on } \, X \backslash C_{i}.
$$
Then, the definition of $v_j$ yields 
\begin{equation}\label{conve_uj}
u_j \uparrow -i \quad \text{locally uniformly on } \, X \backslash C_i.
\end{equation}
It remains to investigate the convergence of $u_j$ on the bounded set $\overline C_i\backslash K'$. Although comparison might fail on this set, what guarantees the convergence $u_j \uparrow w$ is that each $u_j$, being a Perron's solution, is maximal in the set of $F^{g_j}$-subharmonic functions whose boundary values do not exceed $0$ (on $\partial K'$) and $-i-1$ (on $\partial D_j$). Concluding, $u_j \uparrow w$ locally uniformly on $X\backslash K'$, hence on $X \backslash K$. For $j$ large enough, if we set $w_{i+1} = u_j$ it is therefore possible to meet all of $(a),(b),(c)$ in \eqref{cond_induction_w}, as desired.\par
To treat the case when $F$ is coupled to the eikonal equation $E_\xi$, the issue is again the absence of barriers on $\partial K'$ to solve the obstacle problem for $F^{g_j} \cap E_\xi$. Indeed, even if, after the gluing, $\partial K'$ is convex in the direction pointing towards $X \backslash K'$, barriers must be $E_\xi$-subharmonic and the gradient control may prevent to build barriers up to height $-i$ at step $i$. To overcome this problem, the idea is to modify the subequation $E_\xi$ in the gluing region in a different way at each step $i$, weakening the bound $\xi(r)$ by means of a cut-off function $\phi_i$ supported in a neighbourhood of $\partial K'$. The size of $\phi_i$ depends on the $L^\infty$ norm of the gradient of the barriers on $\partial K'$ joining zero to $-i$, and therefore it diverges as $i \ra \infty$. In this way, we clearly lose the gradient control in the limit in a neighbourhood of $K'$, but since $K' \Subset K$, for suitable $\phi_i$ no property of $w_i$ on $X \backslash K$ get lost.
\end{proof}

It is worth to remark that an important case was left uncover by Theorem \ref{teo_main}. For instance, when $F$ is independent on $r$ (examples $(\EE 2), \ldots, (\EE 6)$ with $f \equiv 0$), assumption $(\HH 1)$ does not hold. However $(\HH 1)$ is just used to ensure the strong maximum principle for functions in $\widetilde{F^0}$ on any manifold. Therefore, we can state the following alternative version of our main theorem.

\begin{theorem}\label{teo_main_semH1}
Let $F \subset J^{2}(X)$ be a universal subequation satisfying $(\HH 2)$ and
\begin{itemize}
\item[$(\HH 1')$] $\widetilde{F}$ has the strong maximum principle on each manifold $Y$ where it is defined: $\widetilde{F^0}$-subharmonic functions on $Y$ are constant if they attain a local maximum. 
\end{itemize}
Then, AK-duality holds for $F$.\footnote{Theorem \ref{teo_main_semH1} can be stated for $F$ locally jet-equivalent to a universal example, provided that the strong maximum principle in $(\HH 1')$ holds for each manifold $Y$ and each $\widetilde{F} \subset J^2(Y)$ constructed by gluing as in the theorem.} 
\end{theorem}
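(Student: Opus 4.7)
The plan is to revisit the proof sketch of Theorem \ref{teo_main} and inspect, step by step, where assumption $(\HH 1)$ is invoked, in order to show that $(\HH 1')$ is enough. As in the original argument I will establish the chain $(K) \Rightarrow (K_\weak) \Rightarrow (A) \Rightarrow (K)$, the first arrow being trivial. For the second implication, $(K_\weak) \Rightarrow (A)$, I would repeat verbatim the Phragm\'en--Lindel\"of type argument of \cite{maripessoa}: couple a given $u \in \widetilde{F^0}(\overline U)$ bounded above and positive somewhere with a suitably rescaled weak Khas'minskii potential of $F$, and invoke the comparison property $(\HH 2)$ to force $\sup_{\overline U} u = \sup_{\partial U} u^+$. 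Since this step never uses $(\HH 1)$, it transfers without change.

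The substantive work is $(A) \Rightarrow (K)$. My plan is to reproduce the inductive construction of a decreasing sequence $\{w_i\}$ of USC $F$-subharmonic exhaustions satisfying \eqref{cond_induction_w}, in which each $w_{i+1}$ is obtained as a locally uniform limit of Perron ``almost solutions'' $u_j$ of the obstacle problems \eqref{obstacle_uj_full} on $D_j \backslash K'$, after gluing $X$ near $K$ to manufacture a convex boundary $\partial K'$. The only role of $(\HH 1)$ in that proof is to guarantee that $\widetilde{F^0}$-subharmonic functions on the glued manifold $Y$ cannot attain a positive local maximum, and this is precisely the strong maximum principle used to pass, at every induction step, from the pointwise bounds on $v = (-u_{j})^{*} - i$ (namely $-i \le v \le 1$, $v = -i$ on $\partial K'$, $v \ge 0$ outside $C_i$) together with the Ahlfors property for $\widetilde{F}$ on $X \backslash K'$, to the conclusion $v \equiv 0$ outside $C_{i}$, and then, via the USC Dini-type theorem, to the locally uniform convergence $u_{j} \uparrow -i$ on $X \backslash C_{i}$. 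Under $(\HH 1')$ the required strong maximum principle is granted on every manifold $Y$ by hypothesis, so the whole chain of deductions closes; the remainder of the induction --- locally uniform convergence of $u_{j}$ on $\overline C_{i} \backslash K'$ via maximality of Perron solutions, and the passage to the limit in $i$ to build $w$ --- does not involve $(\HH 1)$ either.

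The point that I would scrutinize most carefully, and which I expect to be the main technical obstacle, is the preservation of Perron subbarriers in the absence of $(\HH 1)$. In the original proof the constant $-i-1$ is a strict $F^{g_{j}}$-subbarrier at $\partial D_{j}$ precisely because negative constants are strictly $F$-subharmonic; once $(\HH 1)$ is dropped, for a universal $F$ independent of $r$ (as in $(\EE 2)$--$(\EE 6)$ with $f \equiv 0$) the constant $-i-1$ is still $F$-subharmonic, since $(x,r,0,0) \in F_{x}$ at every point, and I would verify that the Harvey--Lawson Perron machinery is flexible enough to deliver the ``almost solution'' \eqref{dirichlet_step1} using only non-strict subbarriers at $\partial D_{j}$. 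The barriers at $\partial K'$ come, exactly as in Theorem \ref{teo_main}, from the convexity engineered by the gluing together with the admissibility of $F$, and are unaffected by the removal of $(\HH 1)$. Finally, the repeated modifications of $F$ and $X$ carried out in the proof must preserve $(\HH 1')$ on each intermediate manifold, which is precisely the content of the footnote accompanying the statement and is built into the hypothesis.
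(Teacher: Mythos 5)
Your proposal is correct and follows the same route the paper (implicitly) takes: the authors' entire justification for Theorem~\ref{teo_main_semH1} is the one-line remark that $(\HH 1)$ ``is just used to ensure the strong maximum principle for functions in $\widetilde{F^0}$ on any manifold,'' so replacing it by $(\HH 1')$ closes the argument verbatim. You trace this correctly, and you are right that $(K_\weak) \Rightarrow (A)$ uses only $(\HH 2)$ while the strong-maximum-principle step after applying Ahlfors on $X\setminus K'$ is where $(\HH 1)$ is genuinely needed and $(\HH 1')$ replaces it. Your added scrutiny of the barrier at $\partial D_j$ is a worthwhile observation that the paper glosses over --- the sketch does cite $(\HH 1)$ a second time there, in apparent tension with the claim that $(\HH 1)$ is ``just used'' for the strong maximum principle --- and your resolution is the intended one: for $r$-independent universal subequations (the motivating case, $(\EE 2)$--$(\EE 6)$ with $f \equiv 0$) the jet $(x,c,0,0)$ still lies in $F_x$, so constants remain (non-strict) $F$-subharmonic subbarriers, and the Perron machinery of \cite{HL_dir,maripessoa} does not require strictness at that boundary for the ``almost solution'' in \eqref{dirichlet_step1}. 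The only point I would flag is that you leave this last verification as ``I would verify''; it does hold, but a complete write-up should either cite the relevant Harvey--Lawson lemma or note that the admissibility hypothesis already encodes the needed barrier condition, since the theorem statement does not explicitly restrict $F$ to be $r$-independent.
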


The strong maximum principle for viscosity subsolutions is a classical subject that has been investigated by many authors, in particular we quote \cite{HL_SMP, bardidalio, kawohlkutev_SMP} (cf. also \cite{PuRS, pucciserrin, bmpr} for the quasilinear case). Particularizing Theorems \ref{teo_main} and \ref{teo_main_semH1} to the mean curvature operator and its normalized version, for which the strong maximum principle is proved in \cite{kawohlkutev_SMP}, we have the following:

\begin{theorem}\label{teo_meancurv}
The AK-duality holds for the subequation in $(\EE 7)$ describing solutions of 
\begin{equation}\label{eq_bonito_MC}
\diver \left( \frac{\nabla u}{\sqrt{1+|\nabla u|^2}}\right) \ge f(u) \qquad \text{and} \qquad \diver \left( \frac{\nabla u}{\sqrt{1+|\nabla u|^2}}\right) \ge \frac{f(u)}{\sqrt{1+|\nabla u|^2}},
\end{equation}
for every non-decreasing, odd function $f \in C(\R)$.
\end{theorem}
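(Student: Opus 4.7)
The plan is to realize both operators of Theorem \ref{teo_meancurv} as particular instances of example $(\EE 7)$ and then reduce the claim to Theorems \ref{teo_main} and \ref{teo_main_semH1}. Choosing $a(t)=(1+t^2)^{-1/2}$ one finds $\theta_1(t)=(1+t^2)^{-3/2}$ and $\theta_2(t)=(1+t^2)^{-1/2}$, both in $C^1(\R^+)\cap L^\infty(\R^+)$ and satisfying \eqref{ipo_a}. Hence the unnormalized subequation $F$ associated with the first inequality in \eqref{eq_bonito_MC} and the normalized one $\widehat F$ associated with the second are universal (no $x$-dependence), admissible, and, thanks to the boundedness of $\theta_1,\theta_2$, fall in the range of Corollary \ref{cor_main} as soon as the nonlinearity $f$ satisfies \eqref{fxi}.

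I would then split according to the monotonicity of $f$. \emph{Case 1: $f$ strictly increasing.} Here $f$ matches the requirements in \eqref{fxi} and it suffices to check $(\HH 1)$ and $(\HH 2)$. $(\HH 1)$ is immediate: a negative constant $c$ has $\nabla c=0$ and $\nabla^2 c=0$, while $f(c)<f(0)=0$, so $c$ is strictly $F$-subharmonic (and likewise $\widehat F$-subharmonic). $(\HH 2)$ is the Riemannian comparison principle discussed in the excerpt: since $\theta_1,\theta_2$ are uniformly bounded, the curvature-induced error produced by Ishii's lemma applied to the infimal convolutions with $d(\cdot,\cdot)^2/2$ is absorbed without further a-priori regularity on sub/supersolutions, and the strict monotonicity of $f$ then closes the standard doubling argument. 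Theorem \ref{teo_main} thus delivers the AK-duality.

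\emph{Case 2: $f$ merely non-decreasing} (allowing $f\equiv 0$ and plateaus). I would invoke Theorem \ref{teo_main_semH1}. Hypothesis $(\HH 1')$ amounts to the strong maximum principle for $\widetilde{F^0}$-subharmonic functions on each manifold produced by the gluing step inside the proof of Theorem \ref{teo_main}; this is exactly the Kawohl--Kutev theorem \cite{kawohlkutev_SMP}, stated both for the divergence form and for the normalized (singular) form of mean curvature, and insensitive to smooth modifications of the ambient metric, so it descends to the glued manifold. For $(\HH 2)$, I would perturb $f$ to the strictly increasing odd nonlinearity $f_\varepsilon(r)=f(r)+\varepsilon r$, apply Case 1 to obtain comparison for the corresponding subequation $F_\varepsilon$, and let $\varepsilon\downarrow 0$ exploiting the uniform continuity of the subequation $F$ in $(\EE 7)$ together with the standard stability of comparison along monotone limits of sub/supersolutions.

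The main obstacle, as the excerpt emphasizes, is $(\HH 2)$ on a Riemannian background: it is precisely the boundedness of $\theta_1,\theta_2$ that prevents the error term from Ishii's lemma from forcing extra regularity, and the perturbation-and-limit step is what lets one descend from strictly increasing to merely non-decreasing $f$. Once $(\HH 1)$ or $(\HH 1')$ and $(\HH 2)$ are in place, the conclusion is an immediate particularization of Theorems \ref{teo_main} and \ref{teo_main_semH1}.
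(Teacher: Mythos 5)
Your outline --- realizing both operators as the unnormalized and normalized instances of $(\EE 7)$ with $a(t)=(1+t^2)^{-1/2}$, noting $\theta_1,\theta_2\in L^\infty$, and splitting according to whether $f$ satisfies \eqref{fxi} so as to invoke either Theorem~\ref{teo_main} (via Corollary~\ref{cor_main}) or Theorem~\ref{teo_main_semH1} together with the Kawohl--Kutev strong maximum principle --- is exactly the route the paper takes, and your Case~1 is correct.

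The gap is in the verification of $(\HH 2)$ in Case~2. Your plan is to get comparison for $F_\varepsilon$ (built from $f_\varepsilon=f+\varepsilon r$) via Case~1 and then ``let $\varepsilon\downarrow 0$''. But the data of the comparison statement are a fixed $F$-subharmonic $u$ and a fixed $\widetilde F$-subharmonic $v$, and an $F$-subsolution need not be an $F_\varepsilon$-subsolution: if, say, $f\equiv 0$ on $[-1,1]$ and a test function $\phi$ touches $u$ at a point where $u\in(0,1)$, then $F$ asks $\tr\big(T(\nabla\phi)\nabla^2\phi\big)\ge 0$ while $F_\varepsilon$ asks the strictly stronger $\tr\big(T(\nabla\phi)\nabla^2\phi\big)\ge\varepsilon u>0$. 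The shift $u\mapsto u-\eta$ does not repair this, since $f(u-\eta)+\varepsilon(u-\eta)$ exceeds $f(u)$ whenever $u>\eta$ and $f$ is flat there. The phrase ``standard stability of comparison along monotone limits of sub/supersolutions'' does not name a usable theorem here: what varies is the operator, not the pair $(u,v)$, and precisely the zeroth-order strictness that closes the Crandall--Ishii doubling argument disappears in the limit $\varepsilon\to 0$. In the paper this comparison for $(\EE 7)$ with merely non-decreasing $f$ is established directly in \cite{maripessoa} (cf.\ Section~2.5, Appendix~A, and the discussion around Theorem~2.27 there), exploiting boundedness and strict positivity of $T$ rather than strict monotonicity of $f$; to close your Case~2 you would need to prove, or cite, that comparison theorem. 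The $\varepsilon$-perturbation shortcut does not deliver it.
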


\noindent{\bf Other quasilinear operators.}\vspace{0.1cm}

As said, the lack of a strong enough comparison theorem forces us to require, in the unnormalized version of $(\EE 7)$ of Corollary \ref{cor_main}, the boundedness of the eigenvalues $\theta_1,\theta_2$ on $\R^+_0$. We believe that the AK-duality holds for each subequation locally jet-equivalent to $(\EE 7)$, both normalized and unnormalized, independently of $\theta_1,\theta_2$. More information can be found in Section 2.5 and Appendix A of \cite{maripessoa}, where the authors investigate classes of quasilinear operators where comparison holds. For inequalities of the type
$$
\diver \mathscr{A}(x, \nabla u) \ge \mathscr{B}(x,u),
$$
with $\mathscr{A}$ a Carathe\'odory map that locally behaves like a $q$-Laplacian, and $\mathscr{B}$ non-decreasing in $u$ with $u \mathscr{B}(x,u) \ge 0$, the AK-duality in a slightly less general version was first established in \cite{marivaltorta}. The use of weak instead of viscosity solutions allows to work with very general $\mathscr{A},\mathscr{B}$, since a comparison theorem is easy to show and the obstacle problem is solvable by classical results. Nevertheless, the method does not allow to include a gradient dependence and thus investigate the ``strong" versions of the corresponding Ahlfors property.
\vspace{0.2cm}

\noindent{\bf Liouville property.}\vspace{0.1cm}

As the cases of parabolicity and stochastic completeness show, the Ahlfors property is also related to the next Liouville one:
\begin{definition}
A subequation $F \subset J^2(X)$ has the \emph{Liouville property} if any $u \in F(X)$ bounded from above and non-negative is constant.
\end{definition}

Indeed, in \cite{marivaltorta} the main result itself is expressed as a duality between Khas'minskii and Liouville properties. It is not difficult to show that the Ahlfors property implies the Liouville one, and that the two are equivalent provided that 
\begin{equation}\label{uharmo}
u \equiv 0 \qquad \text{is $F$-harmonic},
\end{equation}
cf. \cite[Prop. 4.2]{maripessoa} and previous work in \cite{ahlforssario, grigoryan, aliasmastroliarigoli, aliasmirandarigoli}. While \eqref{uharmo} holds in many instances, there are notable exceptions, for example the eikonal subequation. For such subequations, it is the Ahlfors property the one that actually realizes duality.

\section{Applications}

\subsection{Completeness, viscosity Ekeland principle and $\infty$-parabolicity}

Let $u \in C^1(X)$ be a function bounded from above and assume that there exists a classical $C^1$-Khas'minskii potential $w$, that is, satisfying only the ehxaustion and the gradient properties in \eqref{khasmi_Hessian_original}. Up to a rescaling, $w$ is a Khas'minskii potential for the eikonal subequation $E = \{\vert p\vert \leq 1\}$. Following the original argument that goes back to Ahlfors \cite{ahlfors}, we consider a sequence of functions $u + \frac{1}{k}w$ each of which attains a maximum at some point $x_k \in X$. Up to choosing a subsequence, it is easy to see that 
$$ 
u(x_k) > \sup_{X} u - \frac{1}{k}, \quad \text{and} \quad  u(y) \leq u(x_k) + \frac{1}{k}d(x_k,y) \quad \text{for } \ y \ \text{nearby } \ x_k.
$$ 
Thus, recalling the AK-duality, one can see the Ahlfors property for the dual eikonal subequation $\widetilde{E} = \{|p| \ge 1\}$ as a sort of \emph{viscosity version of Ekeland principle.} Clearly, the above argument does not give a formal proof of the equivalence between the Ahlfors property for $\widetilde E$ and the Ekeland principle stated in Definition \ref{def_ekeland_classico}. In fact, it follows from the next application of Theorem \ref{teo_main}: 

\begin{theorem}\label{teo_ekeland_intro}
Let $X$ be a Riemannian manifold. Then, the following statements are equivalent:
\begin{itemize}
\item[(1)] $X$ is complete.
\item[(2)] the dual eikonal $\widetilde E= \{|p|\ge 1\}$ has the Ahlfors property (viscosity Ekeland principle).
\item[(3)] the infinity Laplacian $F_\infty \doteq \overline{\{A(p,p)>0\}}$ has the Ahlfors property.
\item[(4)] $F_\infty$ has the next strengthened Liouville  property:
\begin{quote}
Any $F_\infty$-subharmonic function $u \ge 0$ such that $|u(x)| = o\big(\varrho(x)\big)$ as $x$ diverges ($\varrho(x)$ the distance from a fixed origin) is constant.
\end{quote}
\end{itemize}
\end{theorem}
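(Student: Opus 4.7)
I would prove the three equivalences (1)$\Leftrightarrow$(2), (1)$\Leftrightarrow$(3), (1)$\Leftrightarrow$(4) separately, each time coupling AK-duality (Corollary \ref{cor_main}) with elementary properties of the Riemannian distance $\varrho(\cdot)=d(\cdot,o)$.

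\emph{Step 1, (1)$\Leftrightarrow$(2).} Apply Corollary \ref{cor_main} to the eikonal pair in $(\EE 1)$: the Ahlfors property for $\widetilde E$ is equivalent to the Khas'minskii property for $E$. Viscosity subsolutions of $|p|\le 1$ are exactly $1$-Lipschitz functions, so a Khas'minskii potential for $E$ is a $1$-Lipschitz exhaustion $w\le 0$ with $w\to-\infty$ at infinity. If $X$ is complete, $w=-\varrho$ works; conversely, if $X$ is incomplete a non-convergent Cauchy sequence $\{x_n\}$ necessarily leaves every compact set, and a $1$-Lipschitz $w$ would satisfy $|w(x_n)-w(x_m)|\le d(x_n,x_m)\to 0$, making $\{w(x_n)\}$ Cauchy in $\R$ and contradicting $w(x_n)\to-\infty$.

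\emph{Step 2, (1)$\Leftrightarrow$(3).} Since $F_\infty$ is self-dual, Corollary \ref{cor_main} (example $(\EE 8)$) gives Ahlfors for $F_\infty$ iff Khas'minskii for $F_\infty$. For (1)$\Rightarrow$(3), given a pair $(K,h)$ on a complete $X$ I would take $w=-\sigma(\varrho)$ with $\sigma:[0,\infty)\to[0,\infty)$ strictly increasing and concave, $\sigma(0)=0$, chosen by a standard lower-envelope construction so that $w\ge h$ off $K$. At smooth points of $\varrho$, the identity $\nabla^2\varrho(\nabla\varrho,\nabla\varrho)=0$ (from $|\nabla\varrho|=1$) gives $\Delta_\infty w=-\sigma''(\varrho)\,\sigma'(\varrho)^2\ge 0$, and the semiconcavity of $\varrho$ on complete manifolds extends this to a viscosity inequality across the cut locus. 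For (3)$\Rightarrow$(1) I argue contrapositively: if $X$ is incomplete, let $\bar x_\infty$ be the limit in $\bar X\setminus X$ of a non-convergent Cauchy sequence $\{x_n\}$. Then $d(\cdot,\bar x_\infty)=\lim_n d(\cdot,x_n)$ is $F_\infty$-superharmonic on $X$ (locally uniform limit of such), so
\[
u(x):=\max\{1-d(x,\bar x_\infty),\,0\}
\]
is $F_\infty$-subharmonic, nonnegative and bounded. On $U=\{u>0\}$ we have $\sup_{\overline U}u=1$ (approached along $\{x_n\}$) while $\sup_{\partial U}u^+=0$, violating Ahlfors.

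\emph{Step 3, (1)$\Leftrightarrow$(4).} For (4)$\Rightarrow$(1), the witness $u$ from Step 2 is bounded (hence $=o(\varrho)$), nonnegative, non-constant and $F_\infty$-subharmonic, so it contradicts (4) whenever $X$ is incomplete. For (1)$\Rightarrow$(4) I would invoke the comparison-with-cones-from-above characterization of $F_\infty$-subharmonicity: for every $F_\infty$-subharmonic $u$ on a complete Riemannian $X$ and every $x_0\in X$, the function
\[
r\longmapsto\frac{M(r)-u(x_0)}{r}, \qquad M(r)=\sup_{d(x,x_0)\le r}u(x),
\]
is non-decreasing on $(0,\infty)$. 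Because $u\ge 0$ and $u=o(\varrho)$, the bound $M(r)\le\eps(r+\varrho(x_0))+C_\eps$ holds for every $\eps>0$, so $\limsup_{r\to\infty}(M(r)-u(x_0))/r\le\eps$, and monotonicity forces the ratio to be $\le\eps$ on all of $(0,\infty)$. Letting $\eps\to 0$ yields $u\le u(x_0)$ on $X$, and varying $x_0$ makes $u$ constant.

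\emph{Main obstacle.} The most delicate step is the comparison-with-cones characterization in Step 3: the monotonicity of $(M(r)-u(x_0))/r$ is classical on $\R^n$ (Aronsson/Crandall--Evans--Gariepy), but on a manifold it relies on semiconcavity of the distance function and on the fact that $d(\cdot,x_0)$ is $F_\infty$-superharmonic away from $x_0$ and its cut locus. A similar semiconcavity argument underlies the viscosity check of $F_\infty$-subharmonicity of $-\sigma(\varrho)$ in Step 2 and of the upper-envelope witness in Step 2's converse; these are the points where I expect the argument to require the most care.
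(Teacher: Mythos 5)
Your proposal is correct in spirit but departs from the paper's route in an instructive way. For $(2)\Rightarrow(1)$ the paper restricts a Khas'minskii potential $w\in E(X\setminus K)$ to a maximal unit-speed geodesic $\gamma:[0,T)\to X$, $T<\infty$, and derives a contradiction by touching $w\circ\gamma$ from above with a line of slope strictly less than $-1$; your contrapositive via the Cauchy sequence $\{x_n\}$ (a $1$-Lipschitz $w$ makes $\{w(x_n)\}$ Cauchy in $\R$, contradicting $w(x_n)\to-\infty$) is equivalent and arguably cleaner. The larger divergence is in $(3)\Rightarrow(1)$: the paper constructs the $\infty$-capacitors $u_j$ of $(K,\Omega_j)$ along an exhaustion, uses their equi-Lipschitz character and comparison to extract a nonnegative $F_\infty$-harmonic limit $v_\infty$, applies Ahlfors to force $v_\infty\equiv 0$, and then contradicts the uniform Lipschitz bound of $v_j\circ\gamma$ on $[0,T)$. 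You bypass the capacitors entirely by exhibiting the explicit witness $u=\max\{\delta-d(\cdot,\bar x_\infty),0\}$ built from the metric completion; this is more elementary, but it shifts the burden onto two nontrivial facts you must still justify carefully: that $d(\cdot,x_n)$ is $F_\infty$-superharmonic on $X\setminus\{x_n\}$ even when minimizing geodesics need not exist (the semiconcavity/empty-subdifferential mechanism you invoke in Step~2 is indeed the right tool here), and stability of viscosity supersolutions under the uniform limit. A small slip in Step~1: $w=-\varrho$ alone does not satisfy the constraint $h\le w$ required of a Khas'minskii potential for the pair $(K,h)$; you need the same concave lower-envelope reparametrization $w=-\sigma(\varrho)$ that you already deploy in Step~2. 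Finally, the paper's sketch omits $(1)\Rightarrow(3)$ and the equivalence with $(4)$; your Steps~2--3 fill these in, correctly identifying the increasing-slope/comparison-with-cones lemma on a Riemannian manifold as the delicate point, which deserves a full argument relying on comparison for $F_\infty$ and the $F_\infty$-superharmonicity of Riemannian cones $a+b\,d(\cdot,x_0)$.
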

\begin{proof}[{\bf Sketch of the proof:}] The key implications are $(2) \Rightarrow (1)$ and $(3) \Rightarrow (1)$. Both proceed by contradiction, so assume the existence of a  unit speed geodesic $\gamma$ defined on a maximal finite interval $[0,T)$, and pick a small compact set $K$ not intersecting $\gamma([0,T))$ (this is possible since $\gamma$ is diverging).\\ 
\noindent $(2) \Rightarrow (1)$. Apply the AK-duality to produce a Khas'minskii potential $w\in E(X \backslash K)$. By restriction, the function $u \doteq w \circ \gamma$ is $E$-subharmonic on $[0,T)$, that is, any $C^2$ test $\phi$ touching $u$ from above shall satisfy $|\phi'| \le 1$ at touching points. However, since $u\le 0$ and $T<+\infty$, we can choose a line with derivative strictly less than $-1$ lying above the graph of $u$: translating the line downwards up to the first touching point we get a contradiction. Thus, $T = +\infty$ and $X$ is complete.\\
\noindent $(3) \Rightarrow (1)$:
Pick an exhaustion of $X$ by smooth, relatively compact domains $\Omega_j$ with $K \Subset \Omega_1$. As we said before, we exploit the existence of a (unique) $\infty$-capacitor $u_j$ for $(K, \Omega_j)$ (see \cite{juutinen, crandall_visit}), that satisfies
\begin{equation}\label{equation}
\left\{ \begin{array}{l}
u_j \ \text{ is $F_\infty$-harmonic on $\Omega_j \backslash K$,} \\[0.2cm]
u_j = 1 \quad \text{on } \, \partial K, \quad u_j = 0 \quad \text{on } \, \partial \Omega_j.
\end{array}\right.
\end{equation}
By comparison (Theorem 2.27 in \cite{maripessoa}), and since $\{u_j\}$ is equi-Lipschitz because of the minimization properties of $u_j$, the sequence $v_j = 1-u_j$ subconverges locally uniformly to a $F_\infty$-harmonic, Lipschitz function $v_\infty \ge 0$. Applying the Ahlfors property on $X\backslash K$ we get that $v_\infty=0$ on $X \backslash K$. Now, setting $w_j = v_j \circ \gamma$, we have $w_j(0)=0$, $w_j =1$ after some $T_j<T$, and by integration, $1/T \le \|w_j'\|_\infty \le C$ on $[0,T)$, for each $j$. This is impossible, since $w_j \ra 0$ locally uniformly.
\end{proof}

\subsection{The Hessian principle and martingale completeness}

According to 3) in Example \ref{ex_importante}, we formally define the viscosity, weak and strong Hessian principles in terms of Ahlfors properties. Let us consider the subequations $F = \{\lambda_1(A) \ge -1\}$ and $E = \{\vert p\vert \leq 1\}$, whose duals are $\widetilde F = \{\lambda_m(A)\ge 1\}$ and $\widetilde E = \{\vert p\vert \ge 1\}$. Then, $X$ satisfies:
\begin{itemize}
\item[-]  the viscosity, weak Hessian principle if the Ahlfors property holds for $\widetilde F$;
\item[-]  the viscosity, strong Hessian principle if the Ahlfors property holds for $\widetilde F \cup \widetilde E$.
\end{itemize}
The $r$ independence on $F$ and $E$ in the above definition are just for convenience. The properties could be stated as in Example \ref{ex_importante} by making use of a pair of functions $(f,\xi)$ satisfying $(f\xi)$. As discussed in the introduction, there are evidences that an Hessian principle, either weak or strong, be related with the martingale completeness.  Perhaps surprisingly, exploiting the low regularity and the AK-duality, we found that the two Hessian principles are equivalent, and that the martingale completeness is necessary for the validity of them. Apart from a regularity issue, this answers a question (Question 70) raised in \cite{prs_overview} (see also \cite{prs_milan}).  

\begin{theorem}\label{teo_hessianmax}
Let $X$ be a Riemannian manifold. Then, the following properties are equivalent:
\begin{itemize}
\item[(1)] $X$ satisfies the viscosity, weak Hessian principle;
\item[(2)] $X$ satisfies the viscosity, strong Hessian principle;
\item[(3)] $F \cap E$ has the Khas'minskii property with $C^\infty$ potentials.
\end{itemize}
In particular, all the above assertions imply that $X$ is martingale (and so, geodesically) complete.
\end{theorem}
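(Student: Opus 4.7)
The plan is to close the chain $(3) \Rightarrow (2) \Rightarrow (1) \Rightarrow (3)$, and then deduce martingale (and hence geodesic) completeness from $(3)$. Two of the arrows are short. A $C^\infty$ Khas'minskii potential is a fortiori viscosity, so $(3)$ yields the Khas'minskii property for $F \cap E$; since negative constants are strictly $F$-subharmonic (because $\lambda_1(0) = 0 > -1$), the AK-duality of Theorem \ref{teo_main} then produces the Ahlfors property for $\widetilde F \cup \widetilde E$, which is exactly $(2)$. For $(2) \Rightarrow (1)$, the inclusion $\widetilde F \subseteq \widetilde F \cup \widetilde E$ in $J^2(X)$ makes every $\widetilde F$-subharmonic function $(\widetilde F \cup \widetilde E)$-subharmonic in the viscosity sense, and the Ahlfors property descends to the smaller subequation.

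The heart of the proof is $(1) \Rightarrow (3)$. Running the AK-duality in the other direction, the weak Hessian principle delivers a (merely USC) Khas'minskii potential $w_0 \in F(X \setminus K)$ for every pair $(K,h)$: $w_0$ satisfies $\lambda_1(\nabla^2 w_0) \ge -1$ in the viscosity sense, is sandwiched between $h$ and $0$, and diverges to $-\infty$. The upgrade to a potential of $F \cap E$ proceeds via a smooth change of scale preceded by an a priori gradient estimate. Being a viscosity subsolution of $\lambda_1(\nabla^2 w_0) \ge -1$ means that $w_0$ is $1$-semiconvex; comparing $w_0$ from below with downward paraboloids in geodesic normal coordinates (curvature corrections absorbed in a larger constant on a relatively compact piece of $X\setminus K$), and using $w_0 \le 0$, gives a locally uniform viscosity estimate of the form $|\nabla w_0|^2 \le C(1 - w_0)$. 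Choose $A, B > 0$ with $B \ge 1$ and $A^2 B^2 C \le 4$, and set
\[
\phi(t) \doteq A\bigl(1 - \sqrt{1 - Bt}\bigr), \qquad t \le 0,
\]
so that $\phi$ is smooth, convex and increasing, maps $(-\infty,0]$ onto $(-\infty,0]$, and satisfies $\phi'(t) = AB/\bigl(2\sqrt{1-Bt}\bigr) \in (0,\,AB/2]$ together with $\phi'' > 0$. For $\tilde w \doteq \phi(w_0)$, the viscosity chain rule for smooth monotone compositions yields
\[
\nabla^2 \tilde w = \phi''(w_0)\,\nabla w_0 \otimes \nabla w_0 + \phi'(w_0)\,\nabla^2 w_0, \qquad \nabla \tilde w = \phi'(w_0)\,\nabla w_0,
\]
so $\lambda_1(\nabla^2 \tilde w) \ge -\phi'(w_0) \ge -AB/2 \ge -1$ (the rank-one term is positive semidefinite), and $|\nabla \tilde w|^2 \le A^2 B^2 C (1-w_0)/(4(1-Bw_0)) \le 1$ because $B \ge 1$ forces $(1-w_0)/(1-Bw_0) \le 1$. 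Hence $\tilde w \in (F \cap E)(X \setminus K)$, and $\tilde w \to -\infty$ at infinity.

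It remains to regularize $\tilde w$ to $C^\infty$. Exploiting the semiconvex structure, apply a smoothing procedure (sup-convolution or mollification in local normal coordinates under a partition of unity) to produce a smooth approximation whose Hessian and gradient bounds degrade by an arbitrarily small amount, reabsorbed by a small rescaling of $\phi$ and the freedom to replace $h$ by a slightly lower exhaustion. For the final statement, given a smooth potential $w$ from $(3)$, the function $\bar w \doteq 1 - w$ (smoothly extended across $K$, enlarging the bounds by a multiplicative constant if needed) is a positive exhaustion with $|\nabla \bar w| \le 1$ and $\lambda_m(\nabla^2 \bar w) = -\lambda_1(\nabla^2 w) \le 1$, which is Emery's condition \eqref{khasmi_Hessian_original}; martingale completeness then follows from \cite[Prop.~5.37]{emery}, and geodesic completeness from \cite[Prop.~5.36]{emery}. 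The main technical obstacle I anticipate is the viscosity gradient estimate for merely USC semiconvex functions on a curved manifold: making the paraboloid comparison rigorous via the superjet and controlling the Riemannian curvature error uniformly on a relatively compact region of $X \setminus K$ is delicate, and the subsequent smoothing step must simultaneously preserve both the Hessian and gradient constraints, which is where the flexibility of the convex-composition trick is essential.
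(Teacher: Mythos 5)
Your chain $(3) \Rightarrow (2) \Rightarrow (1) \Rightarrow (3)$ is a valid route, and the first two arrows are handled essentially as the paper does (AK-duality plus monotonicity of the Ahlfors property under inclusion of subequations). The convex-reparametrization trick converting a pointwise gradient estimate into a potential for $F\cap E$ is also in the same spirit as the paper, which rescales once a bound of the form $|\nabla w| \le -w$ has been secured.

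The gap is in the gradient estimate itself, and you correctly identify it as the weak spot but misdiagnose both its severity and its fix. The paraboloid comparison you propose — touching $w_0$ from below with downward parabolas along geodesics emanating from $x$ and using $w_0 \le 0$ — yields an estimate of the form $|\nabla w_0(x)| \le R/2 + 2|w_0(x)|/R$ only on balls of radius $R$ on which geodesics exist and curvature corrections are controlled; to upgrade this to $|\nabla w_0|^2 \le C(1 - w_0)$ with a single constant $C$ you must take $R$ of order $\sqrt{|w_0(x)|}$, which diverges at infinity. But the theorem places \emph{no} geometric hypotheses on $X$: geodesic completeness is a \emph{conclusion}, not an assumption, so near infinity you cannot even guarantee geodesics of length of order $1$, let alone of order $\sqrt{|w_0|}$. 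The constant in your estimate therefore degenerates, and the fixed global $\phi$ you apply afterwards has no uniform input. Deferring the smoothing to the very end does not help, because by then the gradient bound must already be global.

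The paper avoids this by reversing your order of operations. It first applies Greene--Wu smoothing to the USC Khas'minskii potential for $F$ \emph{alone} (a purely local procedure, insensitive to the global geometry), obtaining a $C^\infty$ exhaustion $w < 0$ on all of $X$ with $\nabla^2 w \ge w\,\metric$ after a suitable extension across $K$. The gradient bound $|\nabla w| \le -w$ is then extracted not by touching with model functions but by integrating along the flow $\gamma$ of $\nabla w$: with $v = w\circ\gamma$ and $u = |\nabla w|^2\circ\gamma$ one has $v' = u$ and $u' \ge 2vu$, hence $(u - v^2)' \ge 0$; if $|\nabla w(x_0)| > -w(x_0)$ then $u - v^2 \ge c > 0$ along the flow, so $v' \ge v^2 + c$ forces $v$ to reach $0$ in finite time, impossible since the flow stays in the compact superlevel set $\{w \ge w(x_0)\}$ (compactness follows from the exhaustion property, not from completeness). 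This argument is intrinsic and uses no normal coordinates, injectivity radius, or curvature bounds — exactly what your paraboloid comparison would require and what is unavailable here.
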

\begin{proof}[{\bf Idea of the proof:}]
As a consequence of AK-duality and the flexibility in the choice of $(f,\xi)$, each of $(1)$ and $(2)$ is equivalent to the corresponding Khas'minskii property for the dual subequation, that is, for $F$ and $F \cap E$, respectively. The key facts, very specific to such an $F$, are the following:
\begin{itemize}
\item by exploiting Greene-Wu's techniques in \cite{greene_wu}, we can approximate a Khas'minskii potential for $F$ with a smooth Khas'miskii potential, call it $w$. Up to playing with $(f,\xi)$ and extending $w$ on the entire $X$, we can assume that $w$ satisfies 
\begin{equation}\label{w_hessian}
w <0 \quad \text{on } \, X, \qquad w (x) \ra -\infty \quad \text{if $x$ diverges,} \qquad \nabla^2 w \ge - w \metric \quad \text{on } \, X.
\end{equation}
\item Integrating along the flow lines of $\nabla w$ and applying ODE comparison, it is possible to prove that $|\nabla w| \le w$ on $X$. Starting from $w$, it is therefore easy to construct a weak Khas'minskii potential for $F \cap E$ that is smooth. AK-duality again and Greene-Wu approximation yields the full Khas'minskii property.
\end{itemize}
As said above, by work of M. Emery \cite{emery} property $(3)$ is known to imply the martingale completeness of $X$. 
\end{proof}

\begin{remark}
\emph{In order to check the viscosity Hessian principle, we can only consider semiconcave\footnote{By definition, a function $u$ is semiconcave if and only if, locally, there exists $v \in C^2$, such that $u+v$ is concave when restricted to geodesics.} functions, which are locally Lipschitz and $2$-times differentiable a.e.. Thus, regarding to regularity, the viscosity Hessian principle is very close to the classical $C^2$ Hessian principle.
}
\end{remark}

\subsection{Laplacian principles}
Differently from the Hessian principle, in view of elliptic estimates for semilinear equations, the viscosity weak Laplacian principle is equivalent to its corresponding classical $C^2$ principle, that is, to the stochastic completeness of $X$. In this case, the AK-duality improves on the original results in \cite{khasminski, marivaltorta}. Regarding the viscosity, strong Laplacian principle, that is, the Ahlfors property for the subequation $\{\tr(A) \ge 1\}\cup \{\vert p\vert \ge 1\} = \widetilde F \cup \widetilde E$, its equivalence with the classical, $C^2$ one (that is, Yau's principle) seems quite delicate and is currently unknown. However, the AK-duality in Corollary \ref{cor_main} guarantees the following:

\begin{theorem}
Let $X$ be a Riemannian manifold. Then, the following statements are equivalent:
\begin{itemize}
\item[(1)] $X$ satisfies the viscosity, strong Laplacian principle;
\item[(2)] $F \cap E$ has the (weak) Khas'minskii property.
\end{itemize}
In particular, any manifold satisfying the viscosity, strong Laplacian principle must be (geodesically) complete.
\end{theorem}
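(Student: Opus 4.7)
The plan is to apply the AK-duality of Corollary \ref{cor_main} to the combined subequation $F \cap E_\xi$, where $F = \{\tr(A) \ge f(r)\}$ is the semilinear Laplacian subequation from class $(\EE 2)$ (with $k = m$), $E_\xi = \{|p| \le \xi(r)\}$, and $(f,\xi)$ is any fixed pair satisfying \eqref{fxi}. The oddness of $f$ yields the self-duality $\widetilde{F} = F$, while $\widetilde{E_\xi} = \{|p| \ge \xi(-r)\}$, so
\[
\widetilde{F \cap E_\xi} \;=\; \widetilde{F} \cup \widetilde{E_\xi} \;=\; \{\tr(A) \ge f(r)\} \cup \{|p| \ge \xi(-r)\}.
\]
By the reformulation $(\mathscr{A})$, the Ahlfors property for this dual is insensitive to the specific choice of $(f,\xi)$ and coincides with the viscosity strong Laplacian principle as stated in the theorem (where constants $1$ are used). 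Thus $(1)\Leftrightarrow(2)$ will follow at once from the applicability of Corollary \ref{cor_main} to $F \cap E_\xi$.

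To apply the corollary, I verify its structural hypotheses for $F$. The subequation $F$ is universal and belongs to $(\EE 2)$, hence admissible. Assumption $(\HH 1)$ is immediate: for any constant $c < 0$, $\tr(\nabla^2 c) = 0$, while $f(c) < 0$ since $f$ is odd and strictly positive on $\R^+$, so negative constants are strictly $F$-subharmonic. Assumption $(\HH 2)$, comparison, is the standard viscosity comparison for the semilinear inequality $\tr(\nabla^2 u) \ge f(u)$ with strictly increasing $f$, covered by the uniform continuity framework of \cite[Thm.~2.25]{maripessoa}. Corollary \ref{cor_main} then provides Khas'minskii $\Leftrightarrow$ weak Khas'minskii $\Leftrightarrow$ Ahlfors for the dual, giving $(1)\Leftrightarrow(2)$.

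To extract geodesic completeness from $(2)$, I restrict attention to the eikonal factor. If $w$ is a Khas'minskii potential for $F \cap E_\xi$ relative to a pair $(K,h)$, then in particular $w \in E_\xi(X\setminus K)$ is a decreasing exhaustion to $-\infty$, so $E_\xi$ alone has the Khas'minskii property. By the eikonal case of Corollary \ref{cor_main}, this is equivalent to the Ahlfors property for $\widetilde{E_\xi}$, and by the same $(\mathscr{A})$-type flexibility in \eqref{fxi} this coincides with the Ahlfors property for $\widetilde{E}=\{|p|\ge 1\}$. Finally, Theorem \ref{teo_ekeland_intro} identifies this property with the geodesic completeness of $X$, concluding the proof.

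The only delicate technical point is the bookkeeping that allows us to pass between the several equivalent formulations of the strong Laplacian principle (constants vs.\ a pair $(f,\xi)$ satisfying \eqref{fxi}); once this is settled, the statement is a direct invocation of Corollary \ref{cor_main} together with Theorem \ref{teo_ekeland_intro}. The reason the analogous equivalence with the classical $C^2$ Yau principle remains open, as noted just before the theorem, is precisely that no regularity theorem is known that would upgrade a viscosity subsolution of the combined inequality $\max\{\Delta u - f(u),\,|\nabla u|-\xi(-u)\}\ge 0$ to a $C^2$ solution.
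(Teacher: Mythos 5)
Your proof of the main equivalence $(1)\Leftrightarrow(2)$ is correct and follows the paper's intended route: unwind the definitions, invoke the $(\mathscr{A})$-type flexibility to pass between the constant normalization and a pair $(f,\xi)$ satisfying \eqref{fxi}, verify $(\HH 1)$ (odd $f>0$ on $\R^+$ makes negative constants strictly subharmonic) and $(\HH 2)$ (uniform continuity plus strict monotonicity in $r$ of $\tr A - f(r)$), and apply Corollary~\ref{cor_main} to the subequation $F\cap E_\xi$ from $(\EE 2)$. This is exactly the paper's "the AK-duality in Corollary~\ref{cor_main} guarantees the following."

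For the "in particular" completeness claim, however, your route is more roundabout than necessary and leans on a step that is not directly supplied by the paper. You pass from the Khas'minskii property for $F\cap E_\xi$ to that for $E_\xi$ alone (fine), then apply the eikonal AK-duality to obtain the Ahlfors property for $\widetilde{E_\xi}=\{|p|\ge\xi(-r)\}$, and finally claim that "$(\mathscr{A})$-type flexibility" identifies this with the Ahlfors property for $\widetilde E=\{|p|\ge 1\}$ so that Theorem~\ref{teo_ekeland_intro} applies. But \cite[Prop.~5.1]{maripessoa} establishes the constants-vs.-$(f,\xi)$ equivalence for the \emph{combined} subequation $\widetilde F\cup\widetilde E_\xi$; for the eikonal branch in isolation, neither $\widetilde E\subset\widetilde{E_\xi}$ nor $\widetilde{E_\xi}\subset\widetilde E$ holds in general, and since the eikonal constraint involves only $(r,p)$, the translation/rescaling trick used to produce the flexibility for the Hessian branch does not transfer verbatim. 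So this step needs a separate argument, and moreover if one worked purely with an $E_\xi$-Khas'minskii potential and integrated along a finite-length divergent geodesic, the ODE comparison $|u'|\le\xi(u)$ would only yield a contradiction under a non-integrability condition on $1/(-\xi)$, which \eqref{fxi} does not impose.

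The direct route, which avoids the issue entirely and is surely the one the authors have in mind, stays on the Ahlfors side: the Ahlfors property is monotone under inclusion of subequations (if $H'\subset H$ then $H'_0\subset H_0$, so every $H'_0$-subharmonic test function is also $H_0$-subharmonic and the conclusion for $H$ applies to it). Since $\widetilde E\subset\widetilde F\cup\widetilde E$, the viscosity strong Laplacian principle, i.e.\ the Ahlfors property for $\widetilde F\cup\widetilde E$, immediately yields the Ahlfors property for $\widetilde E$, and Theorem~\ref{teo_ekeland_intro} then gives geodesic completeness. This closes the gap in two lines and does not invoke any eikonal-only flexibility.
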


\section{Partial Trace (Grassmannian) operators}

In the context of submanifolds it is interesting to consider extrinsic conditions instead of constrain directly the geometry of the submanifold. For instance, many applications of the Omori-Yau maximum principles (cf. \cite{aliasmastroliarigoli, aliasbessadajczer, bessalimapessoa}) have been investigated in that spirit. Specifically, when $\sigma : X^m \ra Y^n$ is an isometric immersion, and $F \subset J^2(Y)$ is a subequation, the pull-back $\sigma^*F$ induces a subset $H \doteq \overline{\sigma^*F}$, maybe only satisfying the conditions $(P)$ and $(N)$. In some relevant examples, like those in $(\EE 2), (\EE 3)$ and their complex analogues in $(\EE 5)$, the induced $H$ is nontrivial and the following question is therefore natural:
\begin{quote}
\emph{can we transplant the Ahlfors property from $\widetilde{F}$ on $Y$ to $\widetilde{H}$ on $X$?}
\end{quote}
Trying to address the problem by contradiction, that is, assuming that the Ahlfors property does not hold for $\widetilde{H}$, one would need to extend a nontrivial $\widetilde{H}$-subharmonic function on $X$ to the entire $Y$. This seems quite difficult to achieve, especially if $X$ is merely immersed. On the contrary, the use of AK-duality makes the problem feasible. In particular, we can obtain the following result:
 
\begin{theorem}\label{teo_immersions}
Let $\sigma : X^m \ra Y^n$ be a proper isometric immersion. Assume that either
\begin{itemize}
\item[i)] $F_f$ is the universal subequation in $(\EE 2)$, $(\EE 5)$ with $k \le m$, and $\sigma$ has bounded second fundamental form $\II$;\vspace{0.1cm}
\item[ii)] $F_f$ is the universal subequation in $(\EE 3)$ with $k \le m$, and
$$
\sup \Big\{ \big|\tr_{\mathcal{V}}\II(x)\big| \ \ : \ \  x \in X, \ \mathcal{V} \le T_xX \ \text{ $k$-dimensional}\Big\} < +\infty.
$$
\end{itemize}
Then,  
$$
\text{$\widetilde{F_f} \cup \widetilde{E_\xi}$ has the Ahlfors property on $Y$} \quad \Longrightarrow \quad \text{$\widetilde{F_f} \cup \widetilde{E_\xi}$ has the Ahlfors property on $X$},
$$
for some (any) pair $(f,\xi)$ satisfying \eqref{fxi}.
\end{theorem}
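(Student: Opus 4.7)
The plan is to apply AK-duality on both ends: by Theorem \ref{teo_main} on $Y$, convert the Ahlfors hypothesis for $\widetilde{F_f}\cup\widetilde{E_\xi}$ into the existence of a Khas'minskii potential $w$ for $F_{f^*}\cap E_{\xi^*}$ (for an auxiliary pair $(f^*,\xi^*)$ satisfying \eqref{fxi} chosen freely, thanks to the pair-independence built into the statement); transplant $w$ to $X$ by setting $u:=w\circ\sigma$; verify that $u$ is itself a Khas'minskii potential on $X$ for a slightly perturbed pair $(\tilde f,\xi^*)$; and re-apply Theorem \ref{teo_main} on $X$.

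First I would extract a smooth exhaustion $w\in C^\infty(Y\setminus K_Y)$ with $w\le 0$, $w(y)\to-\infty$ as $y$ diverges, $|\nabla w|_Y\le\xi^*(w)$, and $w\in F_{f^*}(Y\setminus K_Y)$; smoothness is arranged via Greene--Wu-type regularization, as in the proof of Theorem \ref{teo_hessianmax}. Setting $u:=w\circ\sigma$, the properness of $\sigma$ forces $u(x)\to-\infty$ as $x$ diverges in $X$, and the isometric property gives $|\nabla u|_X\le|\nabla w|_Y\le\xi^*(u)$, so $u\in E_{\xi^*}$ on $X$ with no further work. The Gauss identity
\begin{equation}\label{formula_pull}
\nabla^2 u(W,W)=\nabla^2 w\bigl(d\sigma W,d\sigma W\bigr)+\bigl\langle\nabla w,\II(W,W)\bigr\rangle,\qquad W\in T_xX,
\end{equation}
is the sole computational tool.

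In case (i), where $F_f$ is a partial-trace operator, fix any $k$-dimensional $V\le T_xX$ with orthonormal basis $\{W_i\}_{i=1}^k$; since $d\sigma V$ is $k$-dimensional in $T_{\sigma(x)}Y$, summing \eqref{formula_pull} gives
\[
\tr_V\nabla^2 u=\tr_{d\sigma V}\nabla^2 w+\langle\nabla w,\tr_V\II\rangle\;\ge\;f^*(u)-k\|\II\|_\infty\,\xi^*(u).
\]
Define $\tilde f(r):=f^*(r)+k\|\II\|_\infty|\xi^*(r)|$ for $r\ge 0$, extended oddly. The pair $(\tilde f,\xi^*)$ satisfies \eqref{fxi}, and $u$ is a Khas'minskii potential on $X$ for $F_{\tilde f}\cap E_{\xi^*}$. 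AK-duality on $X$ then gives the Ahlfors property for $\widetilde{F_{\tilde f}}\cup\widetilde{E_{\xi^*}}$, equivalent to that of $\widetilde{F_f}\cup\widetilde{E_\xi}$ by pair-independence. The almost-complex case of $(\EE 5)$ is verbatim upon restricting \eqref{formula_pull} to the $(1,1)$-component of the complexified Hessian.

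The principal obstacle is case (ii), since $F_f=\{\lambda_k(A)\ge f(r)\}$ is not a partial-trace operator and summing \eqref{formula_pull} over a single frame does not suffice. The plan is to exploit the max--min description of $\lambda_k$: at each $\sigma(x)\in Y\setminus K_Y$ one has an $(n-k+1)$-dimensional subspace $W_Y\subset T_{\sigma(x)}Y$ with $\nabla^2 w\ge f^*(w)I$ on $W_Y$, which pulls back to $W_X\subset(d\sigma)^{-1}(W_Y)$ of dimension $\ge m-k+1$; Cauchy interlacing secures $\lambda_k\bigl(\nabla^2 w|_{d\sigma(T_xX)}\bigr)\ge f^*(w)$. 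Applying \eqref{formula_pull} to unit $W\in W_X$ leaves the bilinear error $\langle\nabla w,\II(W,W)\rangle$ to be bounded \emph{pointwise} in $W$ using only the \emph{partial-trace} hypothesis $|\tr_{\mathcal V}\II|\le C$ on $k$-dimensional $\mathcal V$: this pointwise-from-partial-trace step is the main technical hurdle and the exact place where the Grassmannian form of the hypothesis is engineered to enter. The device I would try is to embed each such $W$ into an orthonormal $k$-frame $\{W,e_2,\ldots,e_k\}$ spanning $\mathcal V$, chosen adapted to the eigenspaces of $\nabla^2 w|_{d\sigma(T_xX)}$, so that the identity
\[
\langle\nabla w,\II(W,W)\rangle=\bigl\langle\nabla w,\tr_{\mathcal V}\II\bigr\rangle-\sum_{j\ge 2}\bigl\langle\nabla w,\II(e_j,e_j)\bigr\rangle
\]
combines the hypothesized bound on the first term with interlacing-type estimates absorbing the auxiliary $\II(e_j,e_j)$ contributions into $\tilde f$. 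Once this pointwise control is obtained, the error absorbs into $\tilde f$ exactly as in case (i), $u$ is a Khas'minskii potential on $X$, and AK-duality on $X$ closes the argument.
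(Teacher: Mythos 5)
Your skeleton is the paper's: push the Ahlfors hypothesis on $Y$ through AK-duality to get a (weak) Khas'minskii potential $w$ for $F_f\cap E_\xi$ on $Y$, compose with $\sigma$, read off that $w\circ\sigma$ is a (weak) Khas'minskii potential on $X$ for a perturbed pair via the Gauss formula \eqref{chainrule}, and close with AK-duality on $X$. The properness argument, the gradient bound $|\nabla u|\le|\bar\nabla w|\le\xi(u)$, and the estimate in case (i) with $\tilde f=f^*-k\|\II\|_\infty\,\xi^*$ all match what the paper intends and are correct. Two points, however, diverge from the paper and contain genuine gaps.

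\textbf{Smoothing.} The paper does \emph{not} smooth the Khas'minskii potential; it keeps $w$ USC and explicitly flags that carrying the chain rule through in the viscosity framework is ``the subtler'' part of the argument. Your appeal to Greene--Wu regularization is borrowed from the proof of Theorem~\ref{teo_hessianmax}, where it is legitimate because the relevant subequation $\{\lambda_1(A)\ge f(r)\}$ is of convexity type. It is not available in the generality you need here: for $(\EE 3)$ with $k>1$ the constraint $\{\lambda_k(A)\ge f(r)\}$ is a non-convex branch of Monge--Amp\`ere, and Richberg/Greene--Wu-type mollification does not preserve it. Even in $(\EE 2)$ the $r$-dependence through $f$ spoils the convexity of the fiber in $(r,A)$, so the smoothing claim is not justified as stated. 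The correct route is to verify the transplanted function is $F_{\tilde f}\cap E_{\xi}$-subharmonic directly in the viscosity sense (comparing test functions through $\sigma$), which is exactly the ``technical subtlety'' the paper refers to.

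\textbf{Case (ii).} Your max--min/Cauchy interlacing step correctly gives $\lambda_k\bigl(\bar\nabla^2w|_{d\sigma T_xX}\bigr)\ge f^*(w)$, and the problem you identify — that you then need a \emph{pointwise} bound on $\langle\bar\nabla w,\II(W,W)\rangle$ from a hypothesis that only controls $k$-fold \emph{traces} of $\II$ — is real. The device you propose does not resolve it. The identity
$$
\langle\bar\nabla w,\II(W,W)\rangle=\langle\bar\nabla w,\tr_{\mathcal V}\II\rangle-\sum_{j\ge 2}\langle\bar\nabla w,\II(e_j,e_j)\rangle
$$
replaces one uncontrolled diagonal term by $k-1$ equally uncontrolled ones, and ``interlacing-type estimates'' act on the Hessian eigenvalues of $w$, not on $\II(e_j,e_j)$, so they cannot absorb those terms into $\tilde f$. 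Concretely: if you fix a $k$-plane $\mathcal V$ and let $W'\in d\sigma\mathcal V$ be a top eigenvector of $\bar\nabla^2w|_{d\sigma\mathcal V}$, nothing in the hypothesis prevents $\II(W,W)$ (with $W=(d\sigma)^{-1}W'$) from being anti-aligned with $\bar\nabla w$ with arbitrarily large magnitude, because the trace bound only constrains an average over a $k$-frame, not the individual diagonal entry you are forced to use. You would need a different choice of test vector inside $\mathcal V$, or a Weyl-type inequality on $\lambda_k(A+\Phi)$ that uses the full structure of $\Phi(W_1,W_2)=\langle\bar\nabla w,\II(W_1,W_2)\rangle$ together with the trace bound — neither of which is supplied. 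As written, case (ii) is not proved.

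One smaller remark: you aim to produce a full Khas'minskii potential on $X$ (staying above a prescribed $h$), whereas the paper deliberately targets only the \emph{weak} Khas'minskii property, since $h_Y\circ\sigma\ge h$ is awkward to arrange for arbitrary $h$; the weak version is what the AK-duality actually requires and is easier to check from a single $\bar w$.
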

\begin{proof}[{\bf Idea of the proof:}] It is conceptually quite simple: in our assumptions, AK-duality holds for each of $(\EE 2),(\EE 3)$ and $(\EE 5)$, and therefore, $F_f \cap E_\xi$ has the Khas'minskii property for some (any) such $(f,\xi)$. Given an arbitrary potential $\bar w$ for $F_f \cap E_\xi$ on $Y$, by the flexibility in the choice of $(f,\xi)$ and the properness of $\sigma$, the composition $w \doteq \bar w \circ \sigma$ should correspond to a weak Khas'minskii potential for $F_g \cap E_\xi$ on $X$, where $g$ just depends on $(f,\xi)$. To check this claim, one uses the standard chain rule formula
\begin{equation}\label{chainrule}
\nabla^2 w(X,Y) = \bar \nabla^2 w(\sigma_* X, \sigma_* Y) + \langle \bar \nabla w, \II(X,Y) \rangle,
\end{equation}
where $\nabla, \bar \nabla$ are the connections on $X$ and $Y$, respectively. The adaptation to viscosity solutions, however, makes the proof of the claim subtler from the technical point of view. The arbitrariness of $\bar w$ and of these choices guarantees the validity of the weak Khas'minskii property on $X$. Then, AK-duality again implies the desired conclusion. 
\end{proof}

\begin{remark}
\emph{The presence of $\bar \nabla w$ in \eqref{chainrule} forces to include the eikonal in the Ahlfors properties, otherwise more restrictive assumptions have to be imposed on $X$. In fact, without a gradient bound, it is possible to control the last term in \eqref{chainrule} if and only if the second fundamental form $\II$ is trace-free on suitable subspaces $\cal V$. For instance, if $\sigma : X^m \ra Y^n$ is a proper minimal immersion and $F_f$ is the subequation described in $(\EE 2)$ with $k=m$, that is, $\widetilde{F_f} = \big\{ \lambda_{n-m+1} + \ldots + \lambda_n(A) \ge f(r)\big\}$, the validity of the Ahlfors property for $\widetilde F_f$ on $Y$ implies that $X$ is stochastically complete (i.e. $X$ has the viscosity, weak Laplacian principle).}
\end{remark}

\begin{remark}
\emph{A result similar to Theorem \ref{teo_immersions} can be stated for Riemannian submersions, cf. \cite[Thm. 7.8]{maripessoa}.
}
\end{remark}

The class of partial trace operators, example $(\EE 2)$, helps to understand the geometry of submanifolds. Thus, having in mind Theorem \ref{teo_immersions}, it is important to investigate sufficient geometric conditions that imply the validity of the Ahlfors property for this kind of operators. Inspired by the seminal papers of Omori \cite{omori} and Yau \cite{yau} (that correspond to cases $k=1$ and $k=m$ in $(\EE 2)$, respectively), we will focus on conditions involving the $k$-th Ricci curvature
\begin{definition}\label{def_medioricci}
Let $Y^n$ be an $n$-dimensional manifold, and let $k \in \{1,\ldots, n-1\}$. The $k$-th Ricci curvature is the function
$$
\begin{array}{ccccl}
\Ricc^{(k)} & : &  TY &  \longrightarrow  &  \R \\[0.3cm]
&& v & \longmapsto & \disp \inf_{\begin{array}{c}
\mathcal{W}_k \le v^\perp \\
\dim \mathcal{W}_k = k
\end{array}} \left( \frac{1}{k} \sum_{j=1}^k \mathrm{Sect}(v \wedge e_j)\right), 
\end{array}
$$
where $\{e_j\}$ is an orthonormal basis of $\mathcal W_k$.
\end{definition}
We recall that bounding from below the $k$-th Ricci curvature is an intermediate condition between the corresponding bounding for the sectional and Ricci curvature. In the next result, 
$$
F_f= \big\{ \lambda_1(A) + \ldots + \lambda_{k+1}(A) \ge f(r)\big\} 
$$
and the functions $f$ and $\xi$ satisfying $(f,\xi)$. Having fixed an origin $o$, we denote with $\rho(x)$ the distance from $o$ and with $\cut(o)$ the cut-locus of $o$, cf. \cite{docarmo}.

\begin{theorem}\label{prop_sufficientiFk}
Let $Y^n$ be complete, and assume that 
\begin{equation}\label{ine_riccimedio}
\Ricc^{(k)}_x(\nabla \rho) \ge - G^2\big(\rho(x)\big) \qquad \forall \, x \not \in \cut(o),
\end{equation}
for some $k \in \{1, \ldots, n-1\}$ and some $G$ satisfying
$$
0 < G \in C^1(\R^+_0), \qquad G' \ge 0, \qquad G^{-1} \not \in L^1(+\infty).
$$
Then, $Y$ has the Ahlfors property for $\widetilde{F_f} \cap \widetilde{E_\xi}$. Moreover, if $\sigma : X^m \ra Y^n$ is a proper isometric immersion, $k+1\le m \le n-1$ and the eigenvalues $\mu_1 \le \ldots \le \mu_m$ of the second fundamental form $\II$ satisfy 
\begin{equation}\label{ipo_tracepatrial_medioricci}
\max\Big\{ \big|\mu_1 + \ldots + \mu_{k+1}\big|, \big|\mu_{m-k} + \ldots + \mu_m\big|\Big\} \le C G(\rho \circ \sigma)\qquad \text{on } \, X,
\end{equation}
for some constant $C>0$, then the Ahlfors property for $\widetilde{F_f} \cap \widetilde{E_\xi}$ holds on $X$. In particular, if $m=k+1$ and the mean curvature satisfies 
$$
\big|H\big| \le C G(\rho \circ \sigma),
$$
then $X$ has the viscosity, strong Laplacian principle.
\end{theorem}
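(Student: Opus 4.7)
The approach is to reduce the Ahlfors property on $Y$ to a Khas'minskii construction via AK-duality (Theorem~\ref{teo_main}). Since the duality \eqref{duality} gives $\widetilde{F_f \cap E_\xi} = \widetilde{F_f} \cup \widetilde{E_\xi}$, the Ahlfors property claimed on $Y$ is equivalent to exhibiting a (weak) Khas'minskii potential for the admissible subequation $F_f \cap E_\xi$. A natural radial candidate is
$$w(x) = -\varphi(\rho(x)), \qquad \varphi(t) = \int_0^t \frac{ds}{G(s)};$$
the integral-divergence hypothesis $G^{-1} \notin L^1(+\infty)$ makes $\varphi$ an exhaustion of $Y$ towards $-\infty$, while $G' \ge 0$ and $G > 0$ ensure $\varphi' = 1/G \in (0, 1/G(0)]$ is positive and non-increasing, and $\varphi'' = -G'/G^2 \le 0$.

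The central analytic input is a Hessian comparison for $\rho$ driven solely by the $\Ricc^{(k)}$ lower bound. Along a minimizing geodesic from $o$, the shape operator $S = \nabla^2\rho|_{\nabla\rho^\perp}$ obeys Riccati's equation $S' + S^2 + R = 0$. Tracing on any parallel $k$-dimensional subspace $W$, Cauchy--Schwarz gives $\mathrm{tr}_W(S^2) \ge (\mathrm{tr}_W S)^2/k$, and combined with $\mathrm{tr}_W R \ge -k G^2$ (from $\Ricc^{(k)} \ge -G^2$), the function $\phi_W = \mathrm{tr}_W S$ satisfies
$$\phi_W' \le -\phi_W^2/k + k G^2.$$
Riccati comparison with the Jacobi auxiliary $\psi = k h_G'/h_G$ (where $h_G'' = G^2 h_G$, $h_G(0)=0$, $h_G'(0)=1$) then yields $\phi_W \le k h_G'/h_G$ uniformly in $W$. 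Taking $W$ at a fixed point as the span of the top $k$ eigenvectors of $S$ gives $\mu_{n-k} + \ldots + \mu_{n-1} \le k h_G'/h_G$, and the ordering $\mu_{n-k-1} \le \mu_{n-k} \le h_G'/h_G$ extends this to a bound on the sum of the $k+1$ largest: $\mu_{n-k-1} + \ldots + \mu_{n-1} \le (k+1) h_G'/h_G$, which grows like $O(G(\rho))$ at infinity.

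Using the minimax identity $\sum_{i=1}^{k+1}\lambda_i(A) = \inf_{\dim V = k+1} \mathrm{tr}_V(A)$ on $\nabla^2 w = -\varphi''(\rho)\,d\rho \otimes d\rho - \varphi'(\rho)\nabla^2\rho$, and noting $-\varphi'' \ge 0$ while $-\varphi' \le 0$, I find
$$\mathrm{tr}_V(\nabla^2 w) \ge -\varphi'(\rho)\,\mathrm{tr}_V(\nabla^2\rho) \ge -\varphi'(\rho)(k+1) h_G'/h_G \ge -C$$
for every $(k+1)$-dimensional $V$, uniformly on $Y \setminus K$ with $K = \{\rho \le \rho_0\}$ for $\rho_0$ large. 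Since $w \to -\infty$, the flexibility in the choice of $(f,\xi)$ satisfying \eqref{fxi} lets me pick $f$ with $f(w) \le -C$ for $\rho > \rho_0$ and $\xi$ dominating the bounded gradient $|\nabla w| = 1/G \le 1/G(0)$ (for instance, $\xi(r) = -r$). This verifies $w \in (F_f \cap E_\xi)(Y \setminus K)$ at smooth points. The remaining regularity at $\cut(o)$ is handled by the standard Calabi upper-support: at a test point $x_* \in \cut(o)$, substitute $\rho$ by the smooth $\tilde\rho_\delta(x) = \delta + \mathrm{dist}(x, \gamma(\delta))$ along a minimizing geodesic from $o$, which touches $\rho$ from above; the Hessian comparison applies to $\tilde\rho_\delta$, and monotonicity of the partial-trace operator $A \mapsto \sum_{i=1}^{k+1}\lambda_i(A)$ together with the viscosity upper-test condition transfers the inequality to $w$.

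The submanifold conclusion follows from Theorem~\ref{teo_immersions}, case ii, whose hypothesis on the partial trace of $\II$ over $(k+1)$-dimensional subspaces matches \eqref{ipo_tracepatrial_medioricci} exactly: in the composition formula \eqref{chainrule}, the error $\langle \bar\nabla\bar w, \II(X,Y)\rangle$ is controlled by $|\bar\nabla\bar w|\cdot |\mathrm{tr}_V\II| \le (1/G) \cdot C G = C$, which is absorbed by a readjustment of $f$ within \eqref{fxi}. The mean-curvature corollary $m = k+1$ is the specialization in which every $(k+1)$-dimensional subspace of $T_xX$ is $T_xX$ itself, so that $F_f$ reduces to the full-trace subequation $\{\mathrm{tr}(A) \ge f(r)\}$, $\mathrm{tr}_V\II$ reduces to $mH$, and the Ahlfors property for $\widetilde{F_f} \cup \widetilde{E_\xi}$ becomes the viscosity strong Laplacian principle. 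The principal technical obstacle is the Riccati comparison under only a $k$-th Ricci bound (rather than a full Ricci or sectional bound) and its clean viscosity treatment at the cut locus, both addressed by the parallel-subspace argument and Calabi's trick above.
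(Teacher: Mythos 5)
Your plan is correct and follows the route the paper's framework sets up: construct the radial Khas'minskii potential $w=-\varphi(\rho)$ with $\varphi'=1/G$, verify membership in $F_f\cap E_\xi$ outside a large ball via the Hessian comparison obtained by tracing the Riccati equation over parallel $k$-planes (Cauchy--Schwarz plus the ordering trick to pass from the $k$-trace to the $(k+1)$-trace, remembering to account for the extra zero eigenvalue of $\nabla^2\rho$ in the radial direction when working on $T_xY$), handle the cut locus by Calabi's trick, conclude the Ahlfors property on $Y$ by AK-duality, and pull $\bar w$ back through $\sigma$, where the key observation --- which you make correctly --- is that the gradient decay $|\bar\nabla \bar w|=1/G$ cancels the allowed growth $|\mathrm{tr}_V\II|\le CG$, leaving a constant error that is absorbed by a readjustment of $(f,\xi)$. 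One correction: $F_f=\{\lambda_1+\ldots+\lambda_{k+1}\ge f(r)\}$ is the partial-trace subequation of $(\EE 2)$, so the relevant clause of Theorem~\ref{teo_immersions} is (i), not (ii), and even (i) requires uniformly bounded $\II$ rather than the $G$-growth allowed by \eqref{ipo_tracepatrial_medioricci}; the submanifold part is therefore not a literal citation of Theorem~\ref{teo_immersions} but a re-derivation along its proof sketch, which your cancellation argument correctly carries out.
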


%

\section{AK-duality and polar sets}

With the aid of AK-duality, we can characterize polar (hence, removable) sets for subequations in terms of preservation of the Ahlfors property. The study of removable sets for linear and nonlinear equations is a classical subject with a long history, and the interested reader can consult the recent \cite{HL_removable} and the references therein for further insight. There, the problem is set in the language of subequations, and to introduce our application we first need to recall some terminology. We say that a subequation $F \subset J^2(X)$ is a 
\begin{itemize}
\item[-] truncated cone subequation if each fiber $F_x$ over a point $x \in X$ is a truncated cone, that is, it satisfies the following property:
$$
\text{if } \ \  J \in F_x, \ \ \text{ then } \ \ tJ \in F_x \quad \forall \, t \in [0,1]. 
$$
\item[-] convex cone subequation if each fiber $F_x$ over a point $x \in X$ is a convex cone.
\end{itemize}

A subequation $M$ is called a \emph{monotonicity cone} for $F$ if $M$ is a convex cone subequation and $F+M \subset F$, that is, $J_1+J_2 \in F$ whenever $J_1 \in F$ and $J_2 \in M$. In this case, we say that $F$ is $M$-monotone. By duality, also $\widetilde{F}$ is $M$-monotone, that is, 
$$
F + M \subset F \qquad \Longrightarrow \qquad \widetilde{F} + M \subset \widetilde{F}.
$$
In particular, since $M$ is a convex cone subequation, $M + M \subset M$ and thus $\widetilde{M}$ is $M$-monotone and $M \subset \widetilde{M}$. In general, $\widetilde{M}$ is a cone subequation much larger than $M$ and it is non-convex. Moreover, it is maximal among $M$-monotone cone subequations: indeed, if $F$ is a cone subequation that is $M$-monotone, then $0 \in F$ and thus $M = 0 + M \subset F$. Duality gives $\widetilde{F} \subset \widetilde{M}$.

\begin{example}
\emph{\begin{itemize}
\item[1)] On an almost complex, Hermitian manifold $X$, consider the subequations
$$
F_j = \big\{ \lambda_j(A^{(1,1)}) \ge 0\big\}, \qquad 1 \le j \le m,
$$
that are the branches of the complex Monge-Amp\`ere equation $\det(\nabla^2 u)^{(1,1)} = 0$. Then, $F_1$ (the only branch that is convex) is a monotonicity cone for each $F_j$.
\item[2)] Let $F_1,\ldots, F_k$ be the branches of the $k$-Hessian equation $\sigma_k(\nabla^2 u) = 0$. Then, the smallest branch $F_1$ is a monotonicity cone for $F_j$ for each $j$.
\item[3)] Let $F$ be a universal subequation, and let $M_k$ be the subequation in $(\EE 2)$ describing $k$-subharmonic functions:
$$
M_k = \big\{ \lambda_1(A) + \ldots + \lambda_k(A) \ge 0 \big\}.
$$
It is proved in \cite{HL_removable} that $M_k$ is a monotonicity cone for $F$ if and only if the \emph{Riesz characteristic} of $F$, $p_F$,  satisfies $p_F \ge k$. The Riesz characteristic of a universal subequation $F$ is an explicitably computable quantity defined as follows: 
$$
p_F = \sup \Big\{ t > 0 \ \ : \ \ I - t \Pi_{v} \in F \ \ \  \forall \, x \in X, \ v \in T_xX \ \text{with} \ |v|=1 \Big\}, 
$$
where $\Pi_v$ is the orthogonal projection onto the span of $v$. The interested reader can consult Section 11 of \cite{HL_removable} for further information.
\end{itemize}
} 
\end{example}

\begin{definition}
Let $F$ be a subequation. We say that a function $\psi \in USC(X)$ is \emph{polar for a set $\Sigma$} if $\Sigma \equiv \{ x : \psi(x) = -\infty\}$. A closed subset $\Sigma \subset X$ is called \emph{$F$-polar} if there exists an open  neighbourhood $\Omega \supset K$ and $\psi \in F(\Omega)$ polar for $\Sigma$. The set $\Sigma$ is called $C^2$ $F$-polar if, moreover, $\psi \in C^2(\Omega \backslash \Sigma)$.
\end{definition}

Sets that are $C^2$ $M$-polar are removable for subequations having $M$ as a monotonicity cone, cf. \cite[Thm. 6.1]{HL_removable}. The result is particularly effective when $F = \widetilde{M}$. Concerning Example 2) above, $M_k$-polar sets are very well understood in the Euclidean space (and, with some technical modifications, also on manifolds). In particular, if $\Sigma$ has locally finite $(p-2)$-dimensional Hausdorff measure for some $p< p_F$, then $\Sigma$ is $M_k$-polar (cf. \cite{HL_removable}, Theorems 11.4, 11.5 and 11.13). Moreover, if $\Sigma \subset \R^m$, having locally finite $(p_F-2)$-dimensional Hausdorff measure is enough to guarantee the $M_k$-polarity, cf. \cite[Thm. A]{HL_removable}.

In our setting, we consider subequations for which the AK-duality holds and thus we restrict to assume at least $(\HH 1)$. Consequently, since $F$ is a closed subset, the constant function $0$ is $F$-subharmonic. Any monotonicity cone $M$ for $F$ satisfies 
$$
M = \{0\} + M \subset F,
$$
thus any $M$-polar subset is automatically $F$-polar.

\begin{theorem}\label{teo_polar}
Let $F$ be an admissible subequation that is locally jet-equivalent to a universal subequation and satisfies $(\HH 1), (\HH 2)$. Suppose that $\widetilde{F}$ has the Ahlfors property on $X$, and let  $\Sigma \subset X$ be a compact subset. Then, the following holds:
\begin{itemize}
\item[(i)] if $F$ is a truncated cone subequation, then 
$$
\begin{array}{c}
\text{$\widetilde{F}$ has the Ahlfors} \\[0.2cm]
\text{property on } \, X \backslash \Sigma 
\end{array}
\qquad \Longleftrightarrow \qquad \text{$\Sigma$ is $F$-polar};
$$
\item[(ii)] if $M$ is a monotonicity cone for $F$ and $\Sigma$ is $M$-polar, then $\widetilde{F}$ has the Ahlfors property on $X \backslash \Sigma$.
\end{itemize}
\end{theorem}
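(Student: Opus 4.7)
The strategy is to apply the AK-duality (Theorem \ref{teo_main}) on the open submanifold $Y := X \setminus \Sigma$, whose ideal boundary is the disjoint union of $\Sigma$ and the original ideal boundary of $X$. Since admissibility, $(\HH 1)$, $(\HH 2)$, and local jet-equivalence to a universal subequation are all local properties, they descend from $X$ to $Y$, so AK-duality is available on $Y$ as well.

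For the forward implication of (i), I would argue as follows. Assume $\widetilde F$ has the Ahlfors property on $Y$. By Theorem \ref{teo_main} applied to $Y$, $F$ has the Khas'minskii property on $Y$. Pick a smooth, relatively compact open set $K_0 \Subset Y$ with $\overline{K_0} \cap \Sigma = \emptyset$, and a continuous negative function $h$ on $Y \setminus K_0$ that diverges to $-\infty$ as $x$ exits every compact subset of $Y$; in particular $h \to -\infty$ as $x \to \Sigma$. The Khas'minskii potential $w$ associated to $(K_0, h)$ then satisfies $w(x) \to -\infty$ as $x \to \Sigma$. Extending $w$ to $\Omega := X \setminus \overline{K_0}$ by setting $w \equiv -\infty$ on $\Sigma$ yields a USC function that is $F$-subharmonic on $\Omega$: on $\Omega \setminus \Sigma$ by construction, and vacuously on $\Sigma$ itself because no $C^2$ function can touch the value $-\infty$ from above. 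This is a polar function for $\Sigma$, so $\Sigma$ is $F$-polar.

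For the reverse implication of (i) together with assertion (ii), I would prove the weak Khas'minskii property for $F$ on $Y$; by AK-duality on $Y$ this will yield the Ahlfors property for $\widetilde F$ on $Y$. The assumed Ahlfors property for $\widetilde F$ on $X$, combined with AK-duality on $X$, provides a weak Khas'minskii potential $w_X \in F(X \setminus K_X)$ with $w_X \le 0$, $w_X(x_0) \ge -\eps/2$, and $\limsup_{x \to \infty \text{ in } X} w_X \le -C$ for a fixed $C > 0$. I would then combine $w_X$ with a scaled polar function $t\psi$ so that the resulting $w$ additionally satisfies $\limsup_{x \to \Sigma} w \le -C$ (indeed $-\infty$). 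In case (ii), the relation $F + M \subset F$ directly gives $w_X + t\psi \in F$ on the overlap $\Omega \cap (X \setminus K_X)$ for every $t \ge 0$; in case (i), the truncated cone property $tF \subset F$ for $t \in [0,1]$ plays the analogous role by scaling $\psi$. Choosing $t$ sufficiently small preserves $w(x_0) \ge -\eps$, while $w_X + t\psi \to -\infty$ as $x \to \Sigma$ delivers the limsup at $\Sigma$.

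The principal technical obstacle is the gluing: $w_X + t\psi$ is only defined on the neighborhood $\Omega$ of $\Sigma$, while the weak Khas'minskii property demands an $F$-subharmonic function on all of $Y \setminus K_X$. I would resolve this through the Perron/obstacle-problem construction underlying the proof of Theorem \ref{teo_main} (as sketched in Section \ref{sec_teoprinci}): build $w$ as a stacked limit of solutions to obstacle problems on an exhaustion of $Y$, with obstacles close to $w_X + t\psi$ near $\Sigma$ and close to $w_X$ far from $\Sigma$, clipped by an $F$-subharmonic negative constant (available by $(\HH 1)$). Barrier construction, the comparison hypothesis $(\HH 2)$, and the maximality of Perron solutions play exactly the same roles as in Theorem \ref{teo_main}. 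A final application of AK-duality on $Y$ then converts the weak Khas'minskii property for $F$ on $Y$ into the sought Ahlfors property for $\widetilde F$ on $Y$.
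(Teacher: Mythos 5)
Your treatment of the forward direction of (i) — apply AK-duality on $Y = X \setminus \Sigma$ to produce a Khas'minskii potential for a pair $(K_0, h)$ with $h \to -\infty$ as $x \to \Sigma$, then extend by $-\infty$ across $\Sigma$ — is exactly the paper's argument, and it is correct.

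For the converse of (i) and for (ii), however, your proposal contains a genuine gap and is also needlessly heavy compared with the paper's route. You work with a \emph{weak} Khas'minskii potential $w_X$ on $X$ and try to build the object on $Y$ by adding $t\psi$ to $w_X$ and then re-running the obstacle-problem machinery; the paper instead invokes the \emph{full} Khas'minskii property on $X$ (available by AK-duality from the Ahlfors property on $X$) and picks a potential $z$ for a pair $(\Omega', h)$ with $\Omega \Subset \Omega' \Subset X$, so that $z$ is defined exactly on the complement of a controlled neighbourhood of $\Sigma$. One then pieces together $\delta\psi$ on $\Omega$ with $\delta z$ outside $\Omega'$ and observes that the family $\{\delta w\}_{\delta \in (0,1]}$ already furnishes the weak Khas'minskii potentials on $Y$, with no Perron/obstacle construction and no summation. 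The key point you miss is that AK-duality on $X$ hands you the strong Khas'minskii property, not merely the weak one, and the freedom in choosing the pair $(\Omega', h)$ lets you anchor $z$ exactly where you need it near $\Sigma$.

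The gap in your sum $w_X + t\psi$ is concrete. In case (i) you invoke the truncated cone property $tF \subset F$, $t \in [0,1]$, to justify that $w_X + t\psi$ is $F$-subharmonic; but a truncated cone gives scaling invariance of single jets, not additivity, and in general the sum of two $F$-subharmonic functions is \emph{not} $F$-subharmonic (this requires convexity of the fibres, which is not assumed). Even in case (ii), the fibre-wise inclusion $F + M \subset F$ does not ``directly'' yield that the sum of an $F$-subharmonic USC function and an $M$-subharmonic USC function is $F$-subharmonic: for viscosity subsolutions this is the nontrivial subharmonic addition theorem of Harvey--Lawson, which holds under additional hypotheses and requires proof. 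The paper sidesteps both issues by never summing: it only rescales $\psi$ (permissible since $M$ is a cone, so $\delta\psi \in M \subset F$ when $0 \in F$, which holds by $(\HH 1)$), respectively rescales $\psi$ and $z$ (permissible in (i) since $F$ is a truncated cone), and glues them by a piecewise definition rather than an addition.
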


\begin{proof}
We first prove $(i)$.\\
$(\Rightarrow)$ By AK-duality, any fixed pair $(K, h)$ with $K \Subset X \backslash \Sigma$ admits a Khas'minskii potential $\psi$. Since $\psi(x) \ra -\infty$ as $x \ra \Sigma$, extending $\psi$ on $X \backslash K$ by setting $\psi = -\infty$ on $\Sigma$ gives a USC and $F$-subharmonic function on $X \backslash K$. Hence, $\Sigma$ is $F$-polar.\\
$(\Leftarrow)$ By $F$-polarity, fix $\Omega \supset \Sigma$ open and $\psi \in F(\Omega)$ satisfying $\Sigma = \{ \psi = -\infty\}$. The upper semicontinuity of $\psi$ implies that $\psi(x) \ra -\infty$ as $x \ra \Sigma$. By AK-duality, we can consider a Khas'minskii potential $z$ for a pair $(\Omega',h)$ with $\Omega \Subset \Omega' \Subset X$. Then, for $\delta \in (0,1]$, the family of functions $\{\delta w\}$ with 
\begin{equation}\label{def_w_polar}
w(x) = \left\{ \begin{array}{ll}
\psi(x) & \quad \text{if } \, x \in \Omega, \\[0.2cm]
z(x) & \quad \text{if } \, x \in X \backslash \Omega',
\end{array}\right.
\end{equation}
realizes the weak Khas'minskii property on $X \backslash \Sigma$. By AK-duality, $\widetilde{F}$ has the Ahlfors property on $X \backslash \Sigma$, as claimed.\\
To show $(ii)$, fix $\Omega \supset \Sigma$ open and $\psi \in M(\Omega)$ that satisfies $\Sigma = \{ \psi = -\infty\}$. By $(\HH 1)$ and since $F$ is a closed subset, the constant $0 \in F(X)$. Therefore, being $M$ a monotonicity cone, $\delta \psi = 0 + \delta \psi \in F(\Omega)$ for each $\delta  \in (0,1]$. Defining $w$ as in \eqref{def_w_polar}, the family $\{\delta w\}$  give again the desired weak Khas'minskii potentials.
\end{proof}

\begin{acknowledgement}
This work was completed when the second author was visiting the Abdus Salam International Center for Theoretical Physics (ICTP), Italy. He is grateful for the warm hospitality and for financial support. The authors would also like to thank the organizing and local committees of the INdAM workshop ``Contemporary Research in elliptic PDEs and related topics" (Bari, May 30/31, 2017) for the friendly and pleasant environment. 
\end{acknowledgement}

\bibliographystyle{amsplain}

\end{document}